\documentclass[11pt, notitlepage]{article}
\usepackage{amssymb,amsmath,comment}
\usepackage{amsthm}
\catcode`\@=11 \@addtoreset{equation}{section}
\def\thesection{\arabic{section}}

\def\theequation{\thesection.\arabic{equation}}
\catcode`\@=12
\usepackage{colortbl}
\usepackage{hyperref}
\usepackage[mathscr]{eucal}
\usepackage{epsf}
\usepackage{esint}
\usepackage{a4wide}

\newcommand{\noi} {\noindent}
\newcommand{\na} {\nabla}

\newcommand{\mb} {\mathbb}

\usepackage[all]{xy}
\catcode`\@=11
\def\theequation{\@arabic{\c@section}.\@arabic{\c@equation}}
\catcode`\@=12

\newtheorem{Theorem}{Theorem}[section]
\newtheorem{Lemma}[Theorem]{Lemma}

\newtheorem{Corollary}[Theorem]{Corollary}
\newtheorem{Remark}[Theorem]{Remark}
\newtheorem{Definition}[Theorem]{Definition}

\begin{document}

{\vspace{0.01in}}

\title{Mixed local and nonlocal weighted singular quasilinear elliptic problem and its associated Sobolev-type inequality}

\author{Prashanta Garain\footnote{Department of Mathematical Sciences\\
Indian Institute of Science Education and Research Berhampur, Permanent Campus, At/Po:-Laudigam, Dist.-Ganjam,
Odisha, India-760003,\,\textbf{Email}: pgarain92@gmail.com}}

\maketitle

\begin{abstract}\noindent
We consider a class of mixed anisotropic and nonlocal singular quasilinear elliptic problem associated with Muckenhoupt weights. The presence of the singular nonlinearity, which blows up near the origin along with its interaction with anisotropy, weighted degeneracy, and nonlocal diffusion creates significant analytical challenges. We employ monotone approximation, weighted Sobolev embeddings, compactness, and variational methods to establish the existence and uniqueness of weak solutions under suitable assumptions on the datum. Further, we characterize the best constant in an associated weighted mixed anisotropic and nonlocal Sobolev-type inequality, prove that it is attained, and show that the normalized weak solution is the unique extremal. These results are new even in the mixed weighted Laplace case \(p=2\).
\end{abstract}

\maketitle

\noi {Keywords: Mixed anisotropic and nonlocal weighted quasilinear equation, singular nonlinearity, weighted anisotropic and nonlocal Sobolev-type inequality, extremal.}

\noi{\textit{2020 Mathematics Subject Classification: 35J92, 35R11, 35J70, 35J75, 35A23}

\bigskip

\tableofcontents

\section{Introduction and main results}
\subsection{Introduction}
Singular elliptic equations have attracted considerable attention in recent years because of their rich mathematical structure and their broad range of applications in physics, geometry, nonlinear elasticity, and the theory of non-Newtonian fluids, see \cite{BPugh}. Here, singularity refers to the fact that the nonlinearity blows up near the origin, making the analysis highly challenging. As a result, establishing the existence, uniqueness, regularity, and other qualitative properties of solutions requires delicate analytical techniques.

Mixed local--nonlocal operators combine classical diffusion with fractional diffusion, thereby providing a more realistic mathematical framework for modeling a variety of physical phenomena \cite{DVap}. While significant progress has been made in the study of purely local and purely nonlocal elliptic problems, the corresponding theory for mixed local--nonlocal operators remains far from complete, particularly in the presence of singular nonlinearities.

In recent years, substantial research has also been devoted to degenerate elliptic problems, for which weighted Sobolev spaces provide the natural functional framework, see for example \cite{Heinonen}. Degenerate elliptic equations involving Muckenhoupt weights arise naturally in harmonic analysis and degenerate diffusion processes. Unlike the uniformly elliptic setting, Muckenhoupt weights may either vanish or become unbounded, leading to additional analytical difficulties and significantly increasing the complexity of the problem.

Motivated by these developments, in this paper we investigate the following weighted mixed anisotropic and nonlocal singular problem
\begin{equation}\label{meqn}
\mathcal{M}_{\alpha}u=f(x)u^{-\delta}
\quad\text{in }\Omega,\qquad
u>0\quad\text{in }\Omega,\qquad
u=0\quad\text{in }\mathbb{R}^{N}\setminus\Omega,
\end{equation}
where $\Omega\subset\mathbb{R}^{N}$, $N\ge2$, is a bounded Lipschitz domain, $0<\delta<1$, and $f\in L^{m}(\Omega)\setminus\{0\}$ is a nonnegative function, where the precise assumptions on $m$ will be specified later. Here the operator $\mathcal{M}_{\alpha}$ is defined by
\begin{equation}\label{Mop}
\mathcal{M}_{\alpha}u:=-H_{p,w}u+(-\Delta_{p,w})^{s}u,\quad 1<p<\infty,
\end{equation}
where for $w\in A_p$ (Muckenhoupt weights, see section 2 for more details)
\[
H_{p,w}u
:=
\mathrm{div}\!\left(
w(x)H(\nabla u)^{p-1}\nabla H(\nabla u)
\right)
\]
is the weighted anisotropic $p$-Laplace operator associated with a Finsler-Minkowski norm $H$ (refer to section 2 for more details). For $0<s<1<p<\infty$ and $w\in A_p$, the weighted fractional $p$-Laplace operator is defined by (see \cite{Ok24})
\[
(-\Delta_{p,w})^{s}u
:=
\mathrm{P.V.}
\int_{\mathbb{R}^{N}}
\frac{|u(x)-u(y)|^{p-2}(u(x)-u(y))}
{|x-y|^{sp}}
K(x,y)\,dy
\]
is the weighted fractional $p$-Laplace operator, where P.V. denotes the principal value, and for almost every $x,y\in\mb{R}^N$, we have
\[
K(x,y)
:=
\frac{w(x)w(y)}{w(B_{x,y})},
\]
and
\[
B_{x,y}
:=
B_{\frac{|x-y|}{2}}
\left(
\frac{x+y}{2}
\right).
\]
Therefore, we observe that the operator $\mathcal{M}_{\alpha}$ has three different type of features simultaneously, namely, anisotropy, nonlocality, and weighted degeneracy.

In the purely local setting, singular problems of the form
\begin{equation}\label{plap}
-\Delta_p u=g(x)u^{-\delta}\quad\text{in }\Omega,
\qquad
u=0\quad\text{on }\partial\Omega,
\end{equation}
have been extensively studied under various assumptions on the singular datum $g$ and the domain $\Omega$. In the semilinear case ($p=2$), the existence of a unique positive classical solution was established in the celebrated work \cite{CRT}. Later, it was observed in \cite{Lz} that such solutions are weak solutions for a suitable range of the singular exponent $\delta$. This restriction was later removed in \cite{Boc-Or}, where existence and regularity results were obtained for all $\delta>0$. These results were further extended to the quasilinear setting in \cite{Canino,DeCave}. We also refer to \cite{Garainmn,GMmed} and the references therein for the study of corresponding weighted $p$-Laplace problems.

An important feature of problem \eqref{plap} is its intimate connection with Sobolev-type inequalities. Indeed, it was shown in \cite{Annali} that weak solutions of \eqref{plap} characterize the extremals of an associated Sobolev-type inequality. This relationship was subsequently extended to the weighted setting in \cite{BGmm22,BGdie} and Carnot group in \cite{GUaamp} respectively.

In the purely nonlocal setting, the singular problem
\begin{equation}\label{fplap}
(-\Delta_p)^s u=g(x)u^{-\delta}
\quad\text{in }\Omega,
\qquad
u=0
\quad\text{in }\mathbb{R}^{N}\setminus\Omega,
\end{equation}
has also received considerable attention in recent years. Existence and uniqueness results for arbitrary $\delta>0$ were established in \cite{Caninoetal}; see also \cite{Garaincpaa,GSanona} for further developments. Moreover, it was proved in \cite{Nmn} that weak solutions of \eqref{fplap} are closely related to the extremals of a nonlocal Sobolev-type inequality. Recently such results have been obtained in the Heisenberg group in \cite{GarainHein}.

We note that problem \eqref{meqn} extends the unweighted mixed local--nonlocal singular equation
\begin{equation}\label{meqnp1}
-\Delta_pu+\alpha\,(-\Delta_p)^su
=
f(x)u^{-\delta}
\quad\text{in }\Omega,
\qquad
u>0
\quad\text{in }\Omega,
\qquad
u=0
\quad\text{in }\mathbb{R}^{N}\setminus\Omega.
\end{equation}
Problem \eqref{meqnp1} and its variants have been the subject of extensive investigation in recent years. In the semilinear case ($p=2$), existence and qualitative properties of weak solutions were established in \cite{ARad,HH} and the references therein. Subsequently, perturbed mixed local--nonlocal singular problems were studied in \cite{AGmz,Ghoshjga,Vecchi,BGjga,BGccm,Garainjga}, where existence and regularity results were obtained. In the quasilinear setting ($1<p<\infty$), existence and qualitative properties of singular mixed local--nonlocal problems were investigated in \cite{BDp,Biroud,Gjms,GKK,GU21}, while the corresponding perturbed problems were considered in \cite{BDpur,Dna,Gpur}; see also the references therein.

A remarkable feature of problem \eqref{meqnp1} is its close connection with mixed local--nonlocal Sobolev-type inequalities. Indeed, it was shown in \cite{GU21} that weak solutions of \eqref{meqnp1} characterize the extremals of an associated mixed Sobolev-type inequality. Moreover, in the Heisenberg group such connections have been proved in \cite{Gop2}. More recently, this connection has been extended to the anisotropic mixed local--nonlocal framework in \cite{Gjms}.

Despite these developments, to the best of our knowledge, no existence and uniqueness theory is available for singular mixed local and nonlocal equations in the presence of Muckenhoupt weights, even in the mixed local and nonlocal Laplace case $p=2$. To the best of our knowledge, the only existing work on mixed local and nonlocal weighted problems is the recent paper \cite{BG26}, which is devoted to the regularity theory of problems involving nonsingular nonlinearities. Likewise, the associated weighted mixed Sobolev-type inequalities, the characterization of their best constants, and the uniqueness of extremals have not yet been investigated. The principal objective of the present paper is to fill this gap.

The analysis of problem \eqref{meqn} presents several new mathematical challenges. First, the singular reaction term $u^{-\delta}$ becomes unbounded near the zero set of the solution, preventing the direct application of standard variational arguments. Second, since the Muckenhoupt weight may vanish or become unbounded, the weighted anisotropic operator is generally neither uniformly elliptic nor uniformly coercive. Third, the fractional operator introduces long-range interactions, so the local and nonlocal energies must be handled simultaneously. Finally, the interaction between anisotropy, weighted degeneracy, nonlocal diffusion, and the singular nonlinearity creates analytical difficulties that are absent in the uniformly elliptic or unweighted settings.

To overcome these difficulties, we mainly follow the approaches from \cite{Boc-Or} and \cite{Nmn}. To be more precise, our approach combines monotone approximation with nonlinear monotone operator theory in weighted Sobolev spaces. A key ingredient is the development of weighted mixed energy estimates allowing the simultaneous treatment of the anisotropic local operator and the weighted fractional operator. These estimates provide the compactness needed to pass to the limit in the singular approximation scheme and also yield the variational characterization of extremals.

Our first main result (Theorem \ref{thm1}) establishes the existence and uniqueness of weak solutions in the weighted Sobolev space under suitable integrability assumptions on the datum $f$. Our second main contribution (Theorem \ref{thm5}) concerns the variational structure associated with problem \eqref{meqn}. We identify the optimal constant in a weighted mixed anisotropic and nonlocal Sobolev-type inequality through the unique weak solution of the singular equation. Furthermore, we prove that this optimal constant is attained, establish the simplicity of all extremals, and show that the normalized weak solution is the unique extremal of the associated constrained minimization problem. These results reveal a close relationship between the singular equation and the corresponding weighted Sobolev-type inequality.

As far as we are aware, all the main results obtained in this paper are new. Even for the linear case $p=2$, neither the well-posedness theory nor the extremal characterization of weighted mixed anisotropic and nonlocal Sobolev-type inequalities appears to be available in the literature. Consequently, this paper provides the first unified treatment of singular equations involving the simultaneous presence of a singular reaction term, anisotropic diffusion, weighted degeneracy, and nonlocal interactions. Besides their intrinsic mathematical interest, the techniques developed here furnish a robust framework for studying a broader class of weighted mixed local--nonlocal problems, including equations with variable nonlinearities, measure data, and more general singular or nonsingular reaction terms.

\subsection*{Organization of the paper
}
In Section 3, we discuss the functional setting and known results. In Section 4, we establish some preliminary results and finally, in Section 5, we prove our main results.

\subsection{Main results}
First we have the following existence and uniqueness result.
\begin{Theorem}\label{thm1}
Assume that $\alpha>0$ and $0<s<1<p<\infty$ with $w\in A_p^{t}$, where $A_p^t$ is defined in \eqref{w}. Let $\delta\in(0,1)$ and \(f\in L^{m}(\Omega)\setminus\{0\}\) be a nonnegative function, where \(m\) is given by \eqref{m}. Then problem~\eqref{meqn} admits a unique weak solution
$
u_\delta\in W_{0}^{1,p}(\Omega,w).
$
\end{Theorem}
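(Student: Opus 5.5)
We argue in the spirit of Boccardo--Orsina, via monotone approximation. For $n\in\mathbb{N}$ we set $f_n:=\min\{f,n\}$ and consider the approximating problem
\[
\mathcal{M}_{\alpha}u_n=f_n\,(u_n^+ + 1/n)^{-\delta}\quad\text{in }\Omega,\qquad u_n=0\quad\text{in }\mathbb{R}^N\setminus\Omega .
\]
The right-hand side is bounded, so we obtain a solution $u_n\in W_0^{1,p}(\Omega,w)$ by a Schauder fixed point argument. We freeze $v\in L^p(\Omega,w)$ and solve the monotone, coercive, hemicontinuous problem $\mathcal{M}_\alpha u=f_n(v^++1/n)^{-\delta}$ by Minty--Browder. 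Strict monotonicity comes from the convexity of $H^p$ (Finsler--Minkowski norm) and the monotonicity of $t\mapsto|t|^{p-2}t$, while coercivity follows from the weighted Poincar\'e inequality for $w\in A_p^t$. The solution map is compact thanks to the compact embedding $W_0^{1,p}(\Omega,w)\hookrightarrow L^p(\Omega,w)$, and it maps a ball into itself because the datum is bounded by $n^{1+\delta}$. Testing with $u_n^-$ gives $u_n\ge0$.

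Next we show that the sequence is monotone and positive. Testing the difference of the equations for $u_n$ and $u_{n+1}$ with $(u_n-u_{n+1})^+$, and using that $f_n(t+1/n)^{-\delta}\le f_{n+1}(t+1/(n+1))^{-\delta}$ is decreasing in $t$, we get $u_n\le u_{n+1}$. Here the nonlocal term is treated with the standard inequality $|a|^{p-2}a-|b|^{p-2}b$ paired against $(\cdot)^+$ differences, and the anisotropic term with the monotonicity of $\xi\mapsto H(\xi)^{p-1}\nabla H(\xi)$. For the lower bound we need, for every $\omega\Subset\Omega$, a constant $c_\omega>0$ with $u_1\ge c_\omega$ on $\omega$. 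This should follow from a weak Harnack inequality or strong minimum principle for weighted mixed supersolutions (the analogue of \cite{BG26}, taken from the preliminaries), since $u_1\not\equiv0$ because $f\ne0$.

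For the uniform estimate we test with $u_n$ itself. Since $\delta<1$,
\[
\int_\Omega wH(\nabla u_n)^p+\alpha[u_n]_{s,p,w}^p\le \int_\Omega f u_n^{1-\delta}\le \|f\|_{L^m}\|u_n\|_{L^{(1-\delta)m'}}^{1-\delta}.
\]
The exponent $m$ from \eqref{m} is presumably chosen so that the weighted Sobolev embedding $W_0^{1,p}(\Omega,w)\hookrightarrow L^{(1-\delta)m'}(\Omega)$ into the unweighted space holds. This uses the $A_p^t$ condition, through a H\"older step with $w^{-t}$ integrable. Since $1-\delta<p$, we obtain a uniform bound on $\|u_n\|_{W_0^{1,p}(\Omega,w)}$. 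Hence $u_n\rightharpoonup u_\delta$ weakly, $u_n\to u_\delta$ a.e. by monotonicity, and $u_\delta\ge c_\omega>0$ on each $\omega$.

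The main obstacle is passing to the limit in the weak formulation for $\phi\in C_c^\infty(\Omega)$. On the right-hand side we have the domination $|f_n(u_n+1/n)^{-\delta}\phi|\le c_{\mathrm{supp}\phi}^{-\delta}f|\phi|$, so dominated convergence applies. On the left-hand side the nonlinear gradient term needs strong or a.e. convergence of $\nabla u_n$. We would get this by a Boccardo--Murat type argument, or more simply by testing with $u_n-u_\delta$ times a cutoff and using the $(S_+)$ property of the mixed operator. The nonlocal term converges because $|u_n(x)-u_n(y)|^{p-2}(u_n(x)-u_n(y))K^{1/p'}|x-y|^{-(N+sp)/p'}$ is bounded in $L^{p'}$ and converges a.e. If needed, we also check that the solution notion in the paper allows test functions in $W_0^{1,p}(\Omega,w)$, by density and Fatou.

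For uniqueness, take two solutions $u,v$ and test both equations with $(u-v)^+$, justified by approximation with compactly supported functions as in \cite{Caninoetal}. The right-hand side gives $\int f(u^{-\delta}-v^{-\delta})(u-v)^+\le0$. Strict monotonicity of the mixed operator then forces $(u-v)^+=0$, and by symmetry $u=v$.
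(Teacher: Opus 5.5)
Your proposal follows the paper's overall scheme: the same approximating problems, the comparison $u_n\le u_{n+1}$ via $(u_n-u_{n+1})^+$, interior positivity of $u_1$ from \cite{BG26}, the energy bound from testing with $u_n$ and H\"older with $(1-\delta)m'=p_t^*$, dominated convergence on the right-hand side using local positivity, and uniqueness by testing with $(u-v)^+$ after density. Two steps are implemented differently.

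\emph{The approximate problem.} The paper applies Theorem \ref{MBthm} directly to the full operator $S$, which is monotone because $v\mapsto -f_n(v^++\frac1n)^{-\delta}$ is nondecreasing. This avoids the fixed point. Your Schauder route is the alternative mentioned in Remark \ref{altrmk}. It works, but you must also check continuity of the solution map. Compactness is simplest in an unweighted $L^q(\Omega)$ via Theorem \ref{emb}.

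\emph{Strong convergence.} The paper uses no $(S_+)$ or Boccardo--Murat argument. Since $u_n$ minimizes the frozen functional $J$, one gets \eqref{prop1}. Choosing $\varphi=u_\delta\ge u_n$ gives $\|u_n\|_{X_\alpha}\le\|u_\delta\|_{X_\alpha}$. Weak lower semicontinuity and uniform convexity then give $u_n\to u_\delta$ in $X_\alpha$ (Lemmas \ref{lemma1}--\ref{strong}), hence a.e.\ convergence of $\nabla u_n$ along a subsequence.

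Your idea of testing with $u_n-u_\delta$ is the same mechanism, but only without the cutoff. Because the approximate equation holds for every test function in $X_\alpha$ and $u_n\le u_\delta$,
\[
\langle \mathcal{M}_\alpha u_n,u_n-u_\delta\rangle=\int_\Omega f_n\Big(u_n+\frac1n\Big)^{-\delta}(u_n-u_\delta)\,dx\le 0 .
\]
This is exactly the $(S_+)$ hypothesis. Via $\nabla H(\xi)\cdot\eta\le H(\eta)$ and H\"older it also gives $\|u_n\|_{X_\alpha}\le\|u_\delta\|_{X_\alpha}$ directly, i.e.\ the paper's bound. With a cutoff $\eta$, however, $\langle\mathcal{M}_\alpha u_n,(u_n-u_\delta)\eta\rangle$ is not the quantity $(S_+)$ controls. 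You would then need a localized Leray--Lions argument and control of the nonlocal commutator
\[
\iint J_p(u_n(x)-u_n(y))\,(u_n-u_\delta)(y)\,(\eta(x)-\eta(y))\,d\mu,
\]
which is avoidable work. State that step globally and make the sign of the right-hand side explicit.

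A minor point: in the paper's normalization $d\mu=K(x,y)|x-y|^{-sp}\,dx\,dy$, and $K$ already contains $w(B_{x,y})^{-1}$. So the extra factor $|x-y|^{-N}$ in your $L^{p'}$ function should be dropped.
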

Now, we state the variational characterization result.
\begin{Theorem}\label{thm5}
Assume that $\alpha>0$ and $0<s<1<p<\infty$ with $w\in A_p^{t}$, where $A_p^t$ is defined in \eqref{w}. Let \(f\in L^{m}(\Omega)\setminus\{0\}\) be a nonnegative function, where \(m\) is given by \eqref{m}, and let
$
u_\delta\in W_{0}^{1,p}(\Omega,w)
$
be the unique weak solution of problem~\eqref{meqn} obtained in Theorem~\ref{thm1}. Then the following assertions hold:

\begin{enumerate}
\item[(a)] \textbf{(Extremal property)}
\begin{align*}
\Theta(\Omega)
&:=\inf_{v\in W^{1,p}_{0}(\Omega,w)\setminus\{0\}}
\left\{
\int_{\Omega} H(\na v)^{p}\,w\,dx
+\alpha
\int_{\mathbb{R}^{N}}\int_{\mathbb{R}^{N}}
|v(x)-v(y)|^p
\,d\mu
:\;
\int_{\Omega}|v|^{1-\delta}f\,dx=1
\right\}\\
&=\left(
\int_{\Omega}
H(\na u_\delta)^{p}\,w\,dx
+
\alpha\int_{\mathbb{R}^{N}}\int_{\mathbb{R}^{N}}
|u_\delta(x)-u_\delta(y)|^{p}\,d\mu
\right)^{\frac{1-\delta-p}{1-\delta}},
\end{align*}
where
$$
d\mu:=\frac{K(x,y)}{|x-y|^{sp}}\text{ as defined in \eqref{dmu}}.
$$

\item[(b)] \textbf{(Weighted anisotropic and nonlocal Sobolev-type inequality)} For every \(v\in W_{0}^{1,p}(\Omega,w)\), the weighted anisotropic and nonlocal Sobolev-type inequality
\begin{equation}\label{inequality2}
C
\left(
\int_{\Omega}
|v|^{1-\delta}f\,dx
\right)^{\frac{p}{1-\delta}}
\le
\int_{\Omega}
H(\na v)^{p}\,w\,dx
+
\alpha\int_{\mathbb{R}^{N}}\int_{\mathbb{R}^{N}}
|v(x)-v(y)|^{p}\,d\mu,
\end{equation}
holds, if and only if
\[
C\le\Theta(\Omega).
\]

\item[(c)] \textbf{(Simplicity of extremals)} Suppose that \(v\in W_{0}^{1,p}(\Omega,w)\) satisfies
\begin{equation}\label{sim}
\Theta(\Omega)
\left(
\int_{\Omega}
|v|^{1-\delta}f\,dx
\right)^{\frac{p}{1-\delta}}
=
\int_{\Omega}
H(\na v)^{p}\,w\,dx
+
\alpha\int_{\mathbb{R}^{N}}\int_{\mathbb{R}^{N}}
|v(x)-v(y)|^{p}\,d\mu.
\end{equation}
Then there exists a constant \(k\in\mathbb{R}\) such that
$
v=ku_\delta.
$
\end{enumerate}
\end{Theorem}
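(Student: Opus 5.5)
We follow the approach of \cite{Annali,Nmn,GU21}. Write
\[
\mathcal{E}(v):=\int_{\Omega}H(\na v)^{p}w\,dx+\alpha\int_{\mathbb{R}^N}\int_{\mathbb{R}^N}|v(x)-v(y)|^p\,d\mu, \qquad \mathcal{E}_u:=\mathcal{E}(u_\delta).
\]
The key steps are as follows.

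\textbf{Step 1: energy identity.} First we test the weak formulation of \eqref{meqn} with $u_\delta$ itself. This is admissible because $u_\delta\in W_0^{1,p}(\Omega,w)$, and, as established in the proof of Theorem \ref{thm1}, test functions in $W_0^{1,p}(\Omega,w)$ are allowed. Using $H(\xi)^{p-1}\na H(\xi)\cdot\xi=H(\xi)^p$ (Euler's identity for the $1$-homogeneous $H$), we obtain
\[
\mathcal{E}_u=\int_\Omega f u_\delta^{1-\delta}\,dx .
\]

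\textbf{Step 2: the key inequality.} For $v\in W_0^{1,p}(\Omega,w)$ we may assume $v\ge0$, since $\mathcal{E}(|v|)\le\mathcal{E}(v)$ because $H(\na|v|)=H(\pm\na v)$ a.e. (using $H$ even) and $||a|-|b||\le|a-b|$. We aim to prove
\begin{equation*}
\int_\Omega f v^{1-\delta}\,dx\le \mathcal{E}_u^{\frac{p-1+\delta}{p}}\,\mathcal{E}(v)^{\frac{1-\delta}{p}} .
\end{equation*}
The route is to test the equation with $v$; validity of this for nonnegative $v\in W_0^{1,p}(\Omega,w)$ should follow from the construction in Theorem \ref{thm1}, via Fatou or density. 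This gives
\[
\int f u_\delta^{-\delta}v=\langle \mathcal{M}u_\delta,v\rangle\le \mathcal{E}_u^{(p-1)/p}\mathcal{E}(v)^{1/p},
\]
by Hölder in both the local term (using $H^\circ(\na H)=1$ and $\na H\cdot\eta\le H(\eta)$) and the nonlocal term. We then write $f v^{1-\delta}=(f u_\delta^{-\delta}v)^{1-\delta}(fu_\delta^{1-\delta})^{\delta}$ and apply Hölder with exponents $1/(1-\delta)$ and $1/\delta$:
\[
\int f v^{1-\delta}\le\Big(\int fu_\delta^{-\delta}v\Big)^{1-\delta}\Big(\int f u_\delta^{1-\delta}\Big)^{\delta}.
\]
Combining these with Step 1 yields the claim.

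\textbf{Step 3: conclusions.}

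For (a): take $v$ with $\int f|v|^{1-\delta}=1$. Then Step 2 gives $\mathcal{E}(v)\ge \mathcal{E}_u^{-(p-1+\delta)/(1-\delta)}=\mathcal{E}_u^{(1-\delta-p)/(1-\delta)}$. Equality is attained by the normalized function $\tilde u=u_\delta/\mathcal{E}_u^{1/(1-\delta)}$, which is admissible by Step 1; homogeneity gives $\mathcal{E}(\tilde u)=\mathcal{E}_u^{1-p/(1-\delta)}$.

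For (b): this follows by homogeneity (degree $p$ on both sides after scaling), with the case $v=0$ being trivial. The constant $\Theta(\Omega)$ is positive and finite by (a), because $\mathcal{E}_u>0$ (as $f\not\equiv0$).

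For (c): equality in \eqref{sim} forces equality in every inequality of Step 2, which we use in order.
\begin{itemize}
\item Equality in $\mathcal{E}(|v|)\le\mathcal{E}(v)$ together with the nonlocal triangle inequality forces $v$ not to change sign. Alternatively, apply the argument to $|v|$ and note that equality in $||a|-|b||\le|a-b|$ a.e.\ in $d\mu$ forces this; then replace $v$ by $|v|$.
\item Equality in the second Hölder inequality gives $fu_\delta^{-\delta}v=c\,fu_\delta^{1-\delta}$ a.e.\ on $\{f>0\}$, i.e.\ $v=cu_\delta$ on $\{f>0\}$.
\end{itemize}
To extend this identification to all of $\Omega$, the cleaner route is strict convexity. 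Both $v/\|\cdot\|$ and $\tilde u$ are minimizers, and the set $\{v\ge0: \int f v^{1-\delta}\ge1\}$ is convex since $t\mapsto t^{1-\delta}$ is concave. Moreover $\mathcal{E}^{1/p}$ is convex, and $\mathcal{E}$ is strictly convex in the nonlocal part because $t\mapsto|t|^p$ is strictly convex. Taking $z=(v+\tilde u)/2$ yields $\mathcal{E}(z)<\Theta$ unless $v(x)-v(y)=\tilde u(x)-\tilde u(y)$ $\mu$-a.e. Since $K>0$, this forces $v-\tilde u$ to be constant, hence zero, as both functions vanish outside $\Omega$.

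\textbf{Main obstacle.} We expect the hardest point to be justifying that the equation can be tested with an arbitrary $v\in W_0^{1,p}(\Omega,w)$, given the singular weight $u_\delta^{-\delta}$. The first attempt will be to use the approximations $u_n$ from Theorem \ref{thm1}, test their equations with $v\ge0$ where the right-hand side is bounded, and pass to the limit: use Fatou on the right-hand side and weak convergence of $\mathcal{M}u_n$, which follows from strong convergence of the $u_n$ in energy proved in Section 4. Alternatively, we would use the truncation-and-density argument of \cite{Canino}. This yields the inequality $\int fu_\delta^{-\delta}v\le\langle\mathcal{M}u_\delta,v\rangle$, which is all that Step 2 needs. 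Step 1, however, requires an exact equality; for it we will rely on testing with $u_\delta$ through the $u_n$ with $u_n\to u_\delta$ strongly.
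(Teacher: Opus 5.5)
Your proof is correct, but for the lower bound in (a) it takes a different route from the paper.

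\textbf{Part (a).} The paper never tests the singular equation with a general function. It invokes Lemma~\ref{minprop}, namely that $u_\delta$ globally minimizes $I_\delta(v)=\frac1p\|v\|_{X_\alpha}^p-\frac1{1-\delta}\int_\Omega (v^+)^{1-\delta}f\,dx$. It then compares $I_\delta(u_\delta)\le I_\delta(\lambda|v|)$ with the optimal scaling $\lambda=\|v\|_{X_\alpha}^{-p/(p+\delta-1)}$, and combines this with the energy identity $\|u_\delta\|_{X_\alpha}^p=\int_\Omega u_\delta^{1-\delta}f\,dx$.

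You use only the Euler--Lagrange equation, in the one-sided form $\int_\Omega f u_\delta^{-\delta}v\,dx\le\langle\mathcal{M}_\alpha u_\delta,v\rangle$ for $v\ge0$. You obtain this by Fatou from the approximate problems together with the strong convergence in Lemma~\ref{strong}. You then apply H\"older in the operator, via $\nabla H(\xi)\cdot\eta\le H(\eta)$, and H\"older with exponents $\frac1{1-\delta},\frac1\delta$ on the splitting $fv^{1-\delta}=(fu_\delta^{-\delta}v)^{1-\delta}(fu_\delta^{1-\delta})^{\delta}$.

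What each route buys:
\begin{itemize}
\item Yours bypasses Lemma~\ref{minprop} entirely, and with it the auxiliary functionals $I_n$ and the identification of their minimizers with $u_n$. It also yields the explicit inequality $\int_\Omega f|v|^{1-\delta}\,dx\le \mathcal{E}_u^{(p-1+\delta)/p}\mathcal{E}(v)^{(1-\delta)/p}$.
\item The paper's route needs no testing beyond $u_\delta$ itself, but it rests on the variational characterization of $u_\delta$.
\end{itemize}

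\textbf{Part (c).} Both arguments use the same midpoint construction together with concavity of $t\mapsto t^{1-\delta}$. You make the final step concrete: strict convexity of $t\mapsto|t|^p$ in the nonlocal term, together with $K>0$, forces $v-\tilde u$ to be constant and hence zero. The paper instead appeals to uniform convexity of $X_\alpha$.

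You also justify that an extremal cannot change sign, using equality in $\bigl||a|-|b|\bigr|\le|a-b|$ $\mu$-a.e. The paper's closing ``if $\hat h\ge0$ \dots\ if $\hat h\le0$'' passes over this point without proof.
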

\begin{Corollary}\label{ansiowgtrmk}
We define
\[
S_{\delta}
:=
\left\{
v\in W_{0}^{1,p}(\Omega,w):
\int_{\Omega}|v|^{1-\delta}f\,dx=1
\right\},
\]
and we set
\[
\zeta_{\delta}
:=
\left(
\int_{\Omega}u_{\delta}^{\,1-\delta}f\,dx
\right)^{-\frac{1}{1-\delta}}.
\]
Then
\[
V_{\delta}:=\zeta_{\delta}u_{\delta}\in S_{\delta}.
\]
Furthermore, Theorem~\ref{thm5} implies that
\[
\Theta(\Omega)
=
\int_{\Omega}
H(\na V_{\delta})^{p}\,w\,dx
+
\int_{\mathbb{R}^{N}}\int_{\mathbb{R}^{N}}
|V_{\delta}(x)-V_{\delta}(y)|^{p}\,d\mu.
\]
In particular, \(V_{\delta}\) is an extremal for the weighted Sobolev-type inequality \eqref{inequality2}. Moreover, \(V_{\delta}\) is the unique weak solution of
\[
\mathcal{M}_{\alpha}V_{\delta}
=
\Theta(\Omega)\,f\,V_{\delta}^{-\delta}
\quad\text{in }\Omega,
\qquad
V_{\delta}>0
\quad\text{in }\Omega,
\]
with
\[
V_{\delta}=0
\quad\text{in }\mathbb{R}^{N}\setminus\Omega.
\]
\end{Corollary}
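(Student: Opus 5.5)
The whole corollary should follow from the homogeneity of $\mathcal{M}_{\alpha}$ together with Theorems \ref{thm1} and \ref{thm5}. Write
\[
E(v):=\int_{\Omega}H(\na v)^{p}\,w\,dx+\alpha\int_{\mathbb{R}^{N}}\int_{\mathbb{R}^{N}}|v(x)-v(y)|^{p}\,d\mu .
\]
I read the energy displayed in the corollary as $E(V_\delta)$, i.e.\ with the factor $\alpha$ in front of the nonlocal term. The key identity is
\[
E(u_\delta)=\int_{\Omega}f\,u_\delta^{1-\delta}\,dx .
\]
To get it, I will test the weak formulation of \eqref{meqn} with $\varphi=u_\delta$ itself. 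Both operators are $(p-1)$-homogeneous. Since $H$ is $1$-homogeneous, Euler's identity gives $H(\xi)^{p-1}\nabla H(\xi)\cdot\xi=H(\xi)^p$, so the left-hand side becomes exactly $E(u_\delta)$.

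Admissibility of $u_\delta$ as a test function is the main obstacle, because of the singular term. If the weak formulation only allows $C_c^\infty$ or compactly supported test functions, I will argue by density. Take $\varphi_n\to u_\delta$ in $W^{1,p}_0(\Omega,w)$, with $0\le\varphi_n$ after truncation. Pass to the limit on the left by weak continuity of the operators. On the right, use Fatou's lemma in one direction and the bound $f u_\delta^{-\delta}\varphi_n\le f u_\delta^{1-\delta}$ in the other, the latter being integrable by the choice of $m$ and the weighted embedding. Presumably this identity has already been established in the proof of Theorem \ref{thm5}(a), in which case I will simply quote it.

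Once the identity is available, the rest is computation. First, $\int_\Omega f\,V_\delta^{1-\delta}\,dx=\zeta_\delta^{1-\delta}\int_\Omega f\,u_\delta^{1-\delta}\,dx=1$, so $V_\delta\in S_\delta$. Next, $\zeta_\delta=E(u_\delta)^{-1/(1-\delta)}$ by the key identity. By $p$-homogeneity of $E$,
\[
E(V_\delta)=\zeta_\delta^{p}E(u_\delta)=E(u_\delta)^{1-\frac{p}{1-\delta}}=E(u_\delta)^{\frac{1-\delta-p}{1-\delta}}=\Theta(\Omega)
\]
by Theorem \ref{thm5}(a). Hence $V_\delta$ attains $\Theta(\Omega)$ on $S_\delta$, which is the extremal property for \eqref{inequality2}.

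For the equation, I will use homogeneity of degree $p-1$ in the weak formulation:
\[
\mathcal{M}_\alpha V_\delta=\zeta_\delta^{p-1}f u_\delta^{-\delta}=\zeta_\delta^{p-1+\delta}f V_\delta^{-\delta}.
\]
Moreover,
\[
\zeta_\delta^{p-1+\delta}=E(u_\delta)^{-\frac{p-1+\delta}{1-\delta}}=E(u_\delta)^{\frac{1-\delta-p}{1-\delta}}=\Theta(\Omega).
\]
Positivity of $V_\delta$ and the exterior condition $V_\delta=0$ are inherited from $u_\delta$.

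For uniqueness, suppose $W$ is any weak solution of the scaled problem. Set $\tilde W:=\Theta(\Omega)^{-1/(p-1+\delta)}W$. The same scaling computation shows $\mathcal{M}_\alpha\tilde W=f\tilde W^{-\delta}$. By the uniqueness part of Theorem \ref{thm1}, $\tilde W=u_\delta$. Hence $W=\Theta(\Omega)^{1/(p-1+\delta)}u_\delta=\zeta_\delta u_\delta=V_\delta$.
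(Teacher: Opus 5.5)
Your proposal is correct and is essentially the paper's argument. The identity $E(u_\delta)=\int_{\Omega}f\,u_\delta^{1-\delta}\,dx$ is exactly what the proof of Theorem~\ref{thm5}(a) obtains by testing with $u_\delta$ via Remark~\ref{Rmwksol}, after which the $p$-homogeneity computation for $E(V_\delta)$ and the $(p-1)$-homogeneity scaling for the equation and its uniqueness (through Theorem~\ref{thm1}) are the routine steps the corollary leaves implicit, and your reading of the displayed energy as including the factor $\alpha$ is the intended one. Only your fallback density sketch is loose: the domination $f\,u_\delta^{-\delta}\varphi_n\le f\,u_\delta^{1-\delta}$ requires $0\le\varphi_n\le u_\delta$, and then dominated convergence alone suffices; but since Remark~\ref{Rmwksol} already admits every test function in $W_0^{1,p}(\Omega,w)$, quoting it as you suggest is what the paper does.
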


\section{Functional setting and known results}
\subsection{Functional setting}
In this section, we recall some standard results concerning Muckenhoupt weights and the associated weighted Sobolev spaces that will be used throughout the paper.

We say that $w$ is a weight function if $w\in L_{\mathrm{loc}}^{1}(\mathbb{R}^{N})$ and satisfy
$
0<w(x)<\infty \quad \text{for a.e. } x\in\mathbb{R}^{N}.
$
\begin{Definition}[Muckenhoupt weight]
Let $1<p<\infty$. Then, we say that a weight function $w$ belongs to the Muckenhoupt class $A_p$, if there exists a constant $C>0$ depending only on $p$ and $w$ such that
\[
\left(\frac{1}{|B|}\int_B w(x)\,dx\right)
\left(\frac{1}{|B|}\int_B w(x)^{-\frac{1}{p-1}}\,dx\right)^{p-1}
\le C
\]
for every ball $B\subset\mathbb{R}^{N}$.
The smallest such constant is called the \emph{$A_p$-constant} of $w$ and is denoted by $[w]_{A_p}$.

A fundamental example is the power weight
$
w(x)=|x|^\alpha,
$
which belongs to $A_p$ if and only if
$
-N<\alpha<N(p-1).
$
\end{Definition}

Next, we recall the definition of the weighted Sobolev space associated with a Muckenhoupt weight.

\begin{Definition}[Weighted Sobolev space]
Let $E\subset\mathbb{R}^{N}$ be an open set and let $w\in A_p$ for some $1<p<\infty$. The weighted Sobolev space $W^{1,p}(E,w)$ is defined by
\[
W^{1,p}(E,w)
:=
\left\{
u\in L^{p}(E,w): |\nabla u|\in L^{p}(\Omega,w)
\right\},
\]
where
\[
L^{p}(E,w)
:=
\left\{
u:E\to\mathbb{R}\ \text{measurable}:
\int_E |u|^p w\,dx<\infty
\right\}.
\]
The space $W^{1,p}(E,w)$ is endowed with the norm
\[
\|u\|_{W^{1,p}(E,w)}
:=
\left(
\int_E |u|^p\,w\,dx
\right)^{\frac1p}
+
\left(
\int_E H(\nabla u)^p\,w\,dx
\right)^{\frac1p}.
\]
\end{Definition}
Let $\alpha>0$, and $\Omega\subset\mb{R}^N$ with $N\geq 2$ be a bounded Lipschitz domain, then we define the space
$
W_0^{1,p}(\Omega,w)
$
as the closure of $C_c^\infty(\Omega)$ with respect to the norm
\begin{equation}\label{norm}
\|u\|_{W_0^{1,p}(\Omega,w)}:=\left(\int_{\Omega}H(\na u)^p\,w\,dx+\alpha[u]^p_{w,s,p}\right)^\frac{1}{p},
\end{equation}
where
$$
[u]_{w,s,p}^p:=\int_{\mb{R}^N}\int_{\mb{R}^N}|u(x)-u(y)|^p\,d\mu,
$$
and we recall that
\begin{equation}\label{dmu}
d\mu:=\frac{K(x,y)}{|x-y|^{sp}}\,dx\,dy,
\end{equation}
with
$$
K(x,y):=\frac{w(x)w(y)}{w(B_{x,y})},\text{ for almost every }x,y\in\mb{R}^N,
$$
and
$$
B_{x,y}:=B_{\frac{|x-y|}{2}}\Big(\frac{x+y}{2}\Big),
$$
is the open ball of radius $|x-y|/2$ with center at $(x+y)/2$. We observe that the space $W_0^{1,p}(\Omega,w)$ is a uniformly convex Banach space. For further properties of weighted Sobolev spaces, we refer the reader to \cite{Drabek, Heinonen} and the references therein. Further, for more details on the fractional Sobolev and fractional weighted Sobolev spaces, we refer to \cite{Ok24, Hitchhiker'sguide}.

Here $H:\mb{R}^N\to[0,\infty)$ is  a Finsler-Minkowski norm, that is \(H:\mathbb{R}^N\to[0,\infty)\) in
\(C^1(\mathbb{R}^N\setminus\{0\})\) is a strictly convex function satisfying
\[
H(\xi)=0 \iff \xi=0,\qquad
H(t\xi)=|t|H(\xi), \quad \forall\,\xi\in\mathbb{R}^N,\; t\in\mathbb{R},
\]
and there exist positive constants \(c_1,c_2\) such that
\[
c_1|\xi|\leq H(\xi)\leq c_2|\xi|,
\qquad \forall\,\xi\in\mathbb{R}^N.
\]
Throughout the paper, the notation
$
\nabla H(\nabla u)
:=
\left.\nabla_{\xi}H(\xi)\right|_{\xi=\nabla u(x)}
$
is used, where \(\nabla_{\xi}H(\xi)\) denotes the gradient of \(H\) with respect to its argument \(\xi\), while \(\nabla u(x)\) is the gradient of \(u\) with respect to the spatial variable \(x\).

For any $1<p<\infty$, it follows from the proof of \cite[Lemma 5.9]{Heinonen} that
\begin{equation}\label{alg2}
\left\langle
H(x)^{p-1}\nabla H(x)
-
H(y)^{p-1}\nabla H(y),
\,x-y
\right\rangle
>0,
\qquad
\forall\, x,y\in\mathbb{R}^N,\; x\neq y.
\end{equation}

The following result follows from \cite[Proposition 2.1]{FW} and \cite[Proposition 1.2]{Xiathesis}.

\begin{Lemma}\label{prop}
Let $H:\mb{R}^N\to[0,\infty)$ be a Finsler-Minkowski norm. Then the following properties hold.

\begin{enumerate}
\item[(i)] For every \(x\in\mathbb{R}^N\setminus\{0\}\),
\begin{equation}\label{eq:H-euler}
x\,\nabla H(x)=H(x).
\end{equation}

\item[(ii)] For every \(x\in\mathbb{R}^N\setminus\{0\}\) and \(t\in\mathbb{R}\setminus\{0\}\),
\begin{equation}\label{eq:H-gradient-homogeneity}
\nabla H(tx)=\operatorname{sign}(t)\,\nabla H(x).
\end{equation}

\item[(iii)] There exists a positive constant \(C\) such that
\begin{equation}\label{eq:H-gradient-bound}
|\nabla H(x)|\le C,
\qquad \forall\,x\in\mathbb{R}^N\setminus\{0\}.
\end{equation}
\end{enumerate}
\end{Lemma}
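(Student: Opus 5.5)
The three properties follow from the two structural assumptions on $H$: positive $1$-homogeneity together with evenness, $H(t\xi)=|t|H(\xi)$, and the two-sided comparison $c_1|\xi|\le H(\xi)\le c_2|\xi|$. We use that $H\in C^1(\mathbb{R}^N\setminus\{0\})$, so all gradients below are taken at nonzero points.

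For (i), fix $x\neq0$ and set $g(t):=H(tx)$ for $t>0$. Homogeneity gives $g(t)=tH(x)$, so $g'(1)=H(x)$. Since $tx\neq0$ near $t=1$, the chain rule gives $g'(1)=\nabla H(x)\cdot x$. Equating the two expressions yields \eqref{eq:H-euler}; this is Euler's identity for $1$-homogeneous functions.

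For (ii), fix $t\neq0$ and differentiate the identity $H(ty)=|t|H(y)$ in $y$ at a point $y=x\neq0$. Both sides are $C^1$ near $x$, because $tx\neq0$. The left side has gradient $t\,\nabla H(tx)$ and the right side has gradient $|t|\nabla H(x)$. Dividing by $t$ gives $\nabla H(tx)=\frac{|t|}{t}\nabla H(x)=\operatorname{sign}(t)\nabla H(x)$, which is \eqref{eq:H-gradient-homogeneity}.

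For (iii), by (ii) with $t=1/|x|$ we have $\nabla H(x)=\nabla H(x/|x|)$, so $\nabla H$ is determined by its values on the unit sphere $S^{N-1}$. Since $\nabla H$ is continuous on $\mathbb{R}^N\setminus\{0\}$ and $S^{N-1}$ is compact, $C:=\max_{S^{N-1}}|\nabla H|<\infty$ works. As a more quantitative alternative, convexity of $H$ gives $H(x+y)\ge H(x)+\nabla H(x)\cdot y$. Combining this with subadditivity (convexity plus homogeneity gives $H(x+y)\le H(x)+H(y)$) yields $\nabla H(x)\cdot y\le H(y)\le c_2|y|$ for all $y$. Choosing $y=\nabla H(x)$ then gives $|\nabla H(x)|\le c_2$.

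The main point to watch is differentiability. Every differentiation has to be performed away from the origin, where $H$ need not be differentiable. This is why the statement restricts to $x\neq0$ and $t\neq0$, and why (iii) is stated only on $\mathbb{R}^N\setminus\{0\}$. With that restriction in place, no further difficulty is expected.
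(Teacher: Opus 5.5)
Your proof is correct. The paper gives no proof of this lemma; it quotes it from \cite[Proposition 2.1]{FW} and \cite[Proposition 1.2]{Xiathesis}, so what you have written is a self-contained replacement for a citation rather than a competing route.

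Your arguments for (i) and (ii) are the standard ones: Euler's identity for $1$-homogeneous functions, and differentiating $H(ty)=|t|H(y)$ in $y$ away from the origin. Both of your arguments for (iii) are sound.

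The second argument for (iii) is the more informative of the two. The inequality $\nabla H(x)\cdot y\le H(y)$ for all $y$ says that the dual norm $H^0(\xi):=\sup_{H(y)\le 1}\xi\cdot y$ satisfies $H^0(\nabla H(x))\le 1$. By (i) this is in fact an equality, and this is the form in which the property usually appears in the anisotropic literature. It also gives the explicit constant $C=c_2$, whereas the compactness argument only yields $C=\max_{S^{N-1}}|\nabla H|$ without a bound in terms of the structural constants.

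That argument uses only convexity of $H$, which is the right hypothesis to rely on. A function satisfying $H(t\xi)=|t|H(\xi)$ is affine along rays, so the paper's ``strictly convex'' must be read in the usual sense: $H^2$, or the unit ball of $H$, is strictly convex.
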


To gain further insight into the operator $H_{p,w}$, we present some important examples of Finsler--Minkowski norms. For additional background on anisotropic \(p\)-Laplace operators, we refer the reader to \cite{BFKzamp, Xiathesis, MV} and the references therein.

\medskip

\noindent\textbf{Examples.} Let \(x=(x_1,x_2,\ldots,x_N)\in\mathbb{R}^N\).

\begin{enumerate}
\item[(i)] For \(q>1\), define
\begin{equation}\label{ex11}
F_q(x)
:=
\left(\sum_{i=1}^{N}|x_i|^q\right)^{\frac{1}{q}}.
\end{equation}

\item[(ii)] For \(\lambda,\mu>0\), define
\begin{equation}\label{ex2}
F_{\lambda,\mu}(x)
:=
\sqrt{\lambda\sqrt{\sum_{i=1}^{N}x_i^{4}}
+\mu\sum_{i=1}^{N}x_i^{2}}.
\end{equation}
\end{enumerate}

It follows from \cite{MV} that the functions \(F_q\) and \(F_{\lambda,\mu}\), defined by \eqref{ex11} and \eqref{ex2}, respectively, are Finsler--Minkowski norms on \(\mathbb{R}^N\).

\begin{Remark}\label{exrmk1}
Let \(i=1,2\), and let \(\lambda_i,\mu_i>0\) satisfy
\[
\frac{\lambda_1}{\mu_1}\neq\frac{\lambda_2}{\mu_2}.
\]
Then the Finsler--Minkowski norms \(F_{\lambda_1,\mu_1}\) and \(F_{\lambda_2,\mu_2}\) are nonisometric on \(\mathbb{R}^N\); see \cite{MV}.
\end{Remark}

\begin{Remark}\label{exrmk2}
For the choice \(H=F_q\) in \eqref{ex11}, the weighted anisotropic \(p\)-Laplace operator is given by
\[
H_{p,w}u
=
\sum_{i=1}^{N}\frac{\partial}{\partial x_i}
\left(
w(x)
\left(
\sum_{k=1}^{N}
\left|
\frac{\partial u}{\partial x_k}
\right|^{q}
\right)^{\frac{p-q}{q}}
\left|
\frac{\partial u}{\partial x_i}
\right|^{q-2}
\frac{\partial u}{\partial x_i}
\right).
\]
This family contains several classical operators as special cases. Indeed,
\[
H_{p,w}u=
\begin{cases}
\operatorname{div}\!\left(w(x)|\nabla u|^{p-2}\nabla u\right),
& \text{if } q=2,\;1<p<\infty,\\[1.2ex]
\displaystyle
\sum_{i=1}^{n}
\frac{\partial}{\partial x_i}
\left(w(x)|u_i|^{p-2}u_i\right),
& \text{if } q=p\in(1,\infty),
\end{cases}
\]
where \(u_i=\dfrac{\partial u}{\partial x_i}\) for \(i=1,\ldots,N\). Consequently, \(H_{p,w}\) reduces to the weighted \(p\)-Laplace operator when \(q=2\), and to the weighted pseudo \(p\)-Laplace operator when \(q=p\).
\end{Remark}

Now we introduce the notion of weak solution to problem~\eqref{meqn}.

\begin{Definition}[Weak solution]\label{wksoldef}
Let $\alpha>0$ and $0<s<1<p<\infty$ with $w\in A_p$. Suppose that \(f\in L^{1}(\Omega)\setminus\{0\}\) is a nonnegative function. A function
$
u\in W_{0}^{1,p}(\Omega,w)
$
is said to be a \emph{weak solution} of problem~\eqref{meqn} if the following conditions are satisfied:
\begin{enumerate}
    \item for every compact set \(\omega\Subset\Omega\), there exists a constant \(C(\omega)>0\) such that
    \[
    u\ge C(\omega)\quad\text{in }\omega;
    \]
    \item for every \(\varphi\in C_{c}^{1}(\Omega)\),
    \begin{equation}\label{wksoleqn}
    \begin{split}
    &\int_{\Omega}
    H(\na u)^{p-1}\nabla H(\na u)\na\varphi\,w\,dx
    +\alpha
    \int_{\mathbb{R}^{N}}\int_{\mathbb{R}^{N}}
    J_p(u(x)-u(y))
    \bigl(\varphi(x)-\varphi(y)\bigr)\,d\mu\\
&\quad    =
    \int_{\Omega}
    f(x)u^{-\delta}\varphi\,dx.
    \end{split}
    \end{equation}
\end{enumerate}
\end{Definition}

\begin{Remark}\label{Rmwksol}
By Lemma \ref{prop}, Definition \ref{wksoldef} is well stated. By a standard density argument (see \cite[Lemma~5.1]{GU21}), the identity \eqref{wksoleqn} remains valid for every test function \(\varphi\in W_{0}^{1,p}(\Omega,w)\).
\end{Remark}

\subsection*{Notations and assumptions}
Throughout the rest of paper, we adopt the following notation and standing assumptions unless explicitly stated otherwise.

\begin{itemize}

\item \(\Omega\) denotes a bounded Lipschitz domain in \(\mathbb{R}^N\) with $N\geq 2$.

    \item For \(l>1\), we denote by
    $
    l'=\frac{l}{l-1}
    $
    the H\"older conjugate exponent of \(l\).

    \item We assume throughout that
    $
    \alpha>0,\,0<s<1<p<\infty,\,\delta\in(0,1).
    $
    Moreover, we assume that $w\in A_p^{t}$ for $t\in I:=(1/(p-1),\infty)\cap (N/p,\infty)$. Further, we use the notation
$p_t:=\frac{pt}{t+1}\in[1,p),$
along with the notation
$
p_t^{*}:=\frac{Np_t}{N-p_t}
$
whenever \(p_t<N\).

\item We assume that $f\in L^m(\Omega)\setminus\{0\}$ is nonegative, where
\begin{equation}\label{m}
     m:=
     \begin{cases}
         \Big(\frac{p_t^{*}}{1-\delta}\Big)',\text{ if }1\leq p_t<N,\\
         l\text{ for some }l>1,\text{ if }p_t=N,\\
         1\text{ if }p_t=N.
     \end{cases}
\end{equation}

\item The space $W_0^{1,p}(\Omega,w)$ will be denoted by $X_\alpha$, where $\alpha>0$ and for $u\in X_\alpha$, we recall the norm $\|u\|_{X_\alpha}$, as defined in \eqref{norm}.

    \item For \(l>1\), we define
    $
    J_l(m):=|m|^{l-2}m,
    \qquad m\in\mathbb{R}.
    $
    \item We recall the notation  
    $
    d\mu:=\frac{K(x,y)}{|x-y|^{sp}}\,dx\,dy
    $
    from \eqref{dmu}.
    \item For an open set \(\omega\), the notation
    $
    \omega\Subset\Omega
    $
    means that \(\omega\) is compactly contained in \(\Omega\), i.e.,
    $
    \overline{\omega}\subset\Omega.
    $
    \item For \(k\in\mathbb{R}\), we use the standard notation
    \[
    k^{+}:=\max\{k,0\},\qquad
    k^{-}:=\max\{-k,0\},\qquad
    k_{-}:=\min\{k,0\}.
    \]

    \item If \(F\) is a measurable function on a measurable set \(S\), then the notation
    $
    c\le F\le d
    \quad\text{in }S
    $
    means that
    $
    c\le F(x)\le d
   \text{ for almost every }x\in S.
    $ We also use the abbreviation a.e. to mean almost every.
    
    \item The letter \(C\) denotes a generic positive constant, whose value may vary from one occurrence to another. Whenever its dependence on certain parameters is relevant, we write
    $
    C=C(r_1,r_2,\ldots,r_k).
    $
\end{itemize}

\subsection{Known results}
The weighted Sobolev embedding theorem stated below can be found in, for instance \cite[Theorem~2.6]{Garainmn}, which will play a crucial role in our analysis. We also refer to \cite{Drabek}.

To state the weighted Sobolev embedding theorem, we introduce the following subclass of Muckenhoupt weights. For $1<p<\infty$ and $N\geq 2$, we define the following set
\[
I
:=
\left(\frac{1}{p-1},\infty\right)
\cap
\left(\frac{N}{p},\infty\right),
\]
and we define the following subclass of $A_p$ by
\begin{equation}\label{w}
A_p^{t}
:=
\left\{
w\in A_p:
w^{-t}\in L^1(\Omega)
\text{ for some }
t\in I
\right\}.
\end{equation}

For example, for given $t\in I$, the power weight
$
w(x)=|x|^\alpha\in A_p^{t},
$
if
$
-\frac{N}{t}<\alpha<\frac{N}{t}.
$

\begin{Theorem}[Weighted Sobolev embedding]\label{emb}
Let $1<p<\infty$ and let $w\in A_p^t$ for some $t\in I$. Then the following continuous embeddings hold:
\[
W^{1,p}(\Omega,w)
\hookrightarrow
W^{1,p_t}(\Omega)
\hookrightarrow
\begin{cases}
L^{q}(\Omega), & p_t\le q\le p_t^{*}, \quad \text{if } 1\le p_t<N,\\[1mm]
L^{q}(\Omega), & 1\le q<\infty, \quad \text{if } p_t=N,\\[1mm]
C(\overline{\Omega}), & \text{if } p_t>N,
\end{cases}
\]
where
$
p_t:=\frac{pt}{t+1}\in[1,p),
$
and
$
p_t^{*}:=\frac{Np_t}{N-p_t}
$
whenever \(p_t<N\).

Moreover, the second embedding above is compact except in the critical case
$
q=p_t^{*},
$
when \(1\le p_t<N\). The same embedding results remain valid for the space
\(W_0^{1,p}(\Omega,w)\).
\end{Theorem}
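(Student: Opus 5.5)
The plan is to obtain the first embedding from H\"older's inequality, using only the integrability assumption $w^{-t}\in L^1(\Omega)$ built into the definition \eqref{w} of $A_p^t$. The second embedding and its compactness will then be the classical unweighted Sobolev and Rellich--Kondrachov theorems on the bounded Lipschitz domain $\Omega$.

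\emph{Step 1 (exponents).} First I would check that $p_t$ lies in the right range. Since $p_t=\frac{pt}{t+1}$, we have $p_t<p$, and $p_t>1$ exactly when $t(p-1)>1$; this is guaranteed by $t\in I$. The key identities are
\[
p-p_t=\frac{p}{t+1},\qquad \frac{p_t}{p-p_t}=t,\qquad \frac{p-p_t}{p}=\frac{1}{t+1}.
\]

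\emph{Step 2 (first embedding).} For a measurable $g$ on $\Omega$, I would write $|g|^{p_t}=\bigl(|g|^{p_t}w^{p_t/p}\bigr)\,w^{-p_t/p}$ and apply H\"older with exponents $p/p_t$ and $p/(p-p_t)$. By Step 1 this gives
\[
\int_\Omega|g|^{p_t}\,dx\le\Bigl(\int_\Omega|g|^p w\,dx\Bigr)^{\frac{p_t}{p}}\Bigl(\int_\Omega w^{-t}\,dx\Bigr)^{\frac{1}{t+1}}.
\]
I would apply this with $g=u$ and with $g=|\nabla u|$. The latter is controlled by the weighted gradient term in the norm, because $c_1|\xi|\le H(\xi)\le c_2|\xi|$. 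The result is $\|u\|_{W^{1,p_t}(\Omega)}\le C\big(\|w^{-t}\|_{L^1(\Omega)},c_1,p,t\big)\,\|u\|_{W^{1,p}(\Omega,w)}$.

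Two points need checking here. Weak derivatives in $W^{1,p}(\Omega,w)$ are well defined because $L^p(\Omega,w)\subset L^1_{\rm loc}(\Omega)$ when $w\in A_p$, by the local integrability of $w^{-1/(p-1)}$. The weak gradient is the same in both spaces, so the inclusion map is well defined.

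\emph{Step 3 (second embedding and compactness).} Since $1\le p_t<p$ and $\Omega$ is a bounded Lipschitz domain, I would invoke the standard Sobolev embedding for $W^{1,p_t}(\Omega)$: into $L^q$ for $q\le p_t^*$ when $p_t<N$, into every $L^q$ with $q<\infty$ when $p_t=N$, and into $C(\overline\Omega)$ (Morrey) when $p_t>N$. For $q\ge p_t$ in the first case, the lower range follows from boundedness of $\Omega$. Rellich--Kondrachov gives compactness for all $q<p_t^*$, and also in the other two cases (via Arzel\`a--Ascoli for $p_t>N$). The critical exponent $q=p_t^*$ is the only exception.

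\emph{Step 4 (the space $W_0^{1,p}(\Omega,w)$).} This step requires the most care, since the norm \eqref{norm} contains only the gradient term and the nonlocal seminorm, and no $L^p(w)$ term. For $\varphi\in C_c^\infty(\Omega)$, Step 2 applied to $\nabla\varphi$ gives $\|\nabla\varphi\|_{L^{p_t}}\le C\|\varphi\|_{X_\alpha}$. The unweighted Poincar\'e inequality in $W_0^{1,p_t}(\Omega)$ then gives $\|\varphi\|_{W^{1,p_t}}\le C\|\varphi\|_{X_\alpha}$, with the nonlocal term simply discarded. By density of $C_c^\infty(\Omega)$, this estimate passes to the closure, giving a continuous map $W_0^{1,p}(\Omega,w)\hookrightarrow W_0^{1,p_t}(\Omega)$. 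Composing with Step 3 yields the stated embeddings and compactness for $W_0^{1,p}(\Omega,w)$.
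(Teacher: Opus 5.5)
Your proposal is correct. The paper gives no proof of this theorem; it quotes it as a known result from \cite[Theorem~2.6]{Garainmn} (see also \cite{Drabek}). Your argument is the standard one behind that citation. You apply H\"older's inequality with exponents $p/p_t$ and $p/(p-p_t)$, exploiting $w^{-t}\in L^1(\Omega)$, and then invoke the classical Sobolev, Rellich--Kondrachov and Morrey theorems for $W^{1,p_t}(\Omega)$ on the bounded Lipschitz domain.

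Your Step~4 usefully fills in something the paper glosses over. The norm \eqref{norm} on $X_\alpha$ has no $L^p(\Omega,w)$ term, so the $W_0^{1,p}(\Omega,w)$ case does not follow by simple restriction. Your route avoids this: you control $\|\nabla\varphi\|_{L^{p_t}}$ by the weighted gradient term and then use the unweighted Poincar\'e inequality in $W_0^{1,p_t}(\Omega)$. This bypasses any weighted Poincar\'e inequality.
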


\begin{Remark}\label{rmk:embedding}
If there exist positive constants \(c\) and \(d\) such that
$
0<c\le w(x)\le d
\text{ for a.e. }x\in\Omega,
$
then
$
W^{1,p}(\Omega,w)=W^{1,p}(\Omega)
$
with equivalent norms. Consequently, Theorem~\ref{emb} reduces to the classical Sobolev embedding theorem, with \(p_t\) replaced by \(p\).
\end{Remark}

The following result is taken from \cite[Theorem 9.14]{MB}.

\begin{Theorem}\label{MBthm}
Let $V$ be a real separable reflexive Banach space and $V^*$ be the dual of $V$. Suppose that
$
T : V \to V^*
$
is a coercive and demicontinuous monotone operator. Then $T$ is surjective; that is, given any
$f \in V^*$, there exists $u \in V$ such that
$
T(u) = f.
$
If $T$ is strictly monotone, then $T$ is also injective.
\end{Theorem}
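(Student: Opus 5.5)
This is a classical Browder--Minty result, and I would prove it by Galerkin approximation followed by Minty's monotonicity trick. We may assume $f=0$ by replacing $T$ with $T-f$; this operator is still monotone and demicontinuous, and it remains coercive because $\langle f,u\rangle/\|u\|$ is bounded. Coercivity then gives $R>0$ such that $\langle T(u),u\rangle>0$ whenever $\|u\|=R$.

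\emph{Galerkin step.} Since $V$ is separable, I fix an increasing sequence of finite-dimensional subspaces $V_n=\mathrm{span}\{e_1,\dots,e_n\}$ whose union is dense in $V$. On $V_n\cong\mathbb{R}^n$ I define $P_n:V_n\to V_n$ by $\langle P_n(u),v\rangle=\langle T(u),v\rangle$ for all $v\in V_n$, using any inner product on $V_n$. Demicontinuity of $T$ makes $P_n$ continuous, since weak and strong convergence coincide in finite dimensions. Moreover $\langle P_n(u),u\rangle>0$ on the sphere $\|u\|=R$. The acute-angle lemma, a consequence of Brouwer's fixed point theorem, then yields $u_n\in V_n$ with $\|u_n\|\le R$ and
\[
\langle T(u_n),v\rangle=0\quad\text{for all } v\in V_n .
\]

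\emph{Boundedness of $T(u_n)$.} I expect this to be the main obstacle. My first attempt uses the local boundedness of monotone operators on a Banach space, which follows from the uniform boundedness principle applied through monotonicity. Concretely, for any $w\in V$ with $\|w\|\le r$, monotonicity gives
\[
\langle T(u_n),w\rangle\le \langle T(u_n),u_n\rangle-\langle T(w),u_n-w\rangle=-\langle T(w),u_n-w\rangle .
\]
Here $\langle T(u_n),u_n\rangle=0$ by the Galerkin identity. The right-hand side is bounded independently of $n$ for each fixed $w$. Applying this on a small ball where $T$ is bounded, which holds by local boundedness, gives $\sup_n\|T(u_n)\|_{V^*}<\infty$.

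\emph{Passage to the limit.} By reflexivity I extract a subsequence with $u_n\rightharpoonup u$ in $V$ and $T(u_n)\rightharpoonup^{*}\chi$ in $V^*$. For $v\in V_k$ and $n\ge k$ we have $\langle T(u_n),v\rangle=0$, so $\langle\chi,v\rangle=0$ on the dense union $\bigcup_k V_k$; hence $\chi=0$. In addition $\langle T(u_n),u_n\rangle=0\to0=\langle\chi,u\rangle$.

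\emph{Minty trick.} For any $v\in V$, monotonicity gives $\langle T(u_n)-T(v),u_n-v\rangle\ge0$, and letting $n\to\infty$ yields $\langle -T(v),u-v\rangle\ge0$. I take $v=u-\lambda z$ with $\lambda>0$ and $z\in V$ arbitrary, divide by $\lambda$, and obtain $\langle T(u-\lambda z),z\rangle\ge0$. Letting $\lambda\to0^+$ and using demicontinuity gives $\langle T(u),z\rangle\ge0$ for every $z$. Replacing $z$ by $-z$ shows $T(u)=0$, which proves surjectivity.

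\emph{Injectivity.} If $T$ is strictly monotone and $T(u_1)=T(u_2)$, then $\langle T(u_1)-T(u_2),u_1-u_2\rangle=0$. Strict monotonicity forces $u_1=u_2$, so $T$ is injective.
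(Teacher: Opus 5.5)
Your proposal is correct, and it is the standard Galerkin--Brouwer--Minty proof of the Browder--Minty theorem; the paper gives no proof of its own and simply quotes the result from \cite{MB}, where the argument follows this same scheme. The only flaw is a harmless sign slip: with $v=u-\lambda z$ the limit inequality gives $\langle T(u-\lambda z),z\rangle\le 0$ rather than $\ge 0$, which your $\pm z$ step absorbs. You also do not need local boundedness of monotone operators, since your own inequality already yields $|\langle T(u_n),w\rangle|\le \max\{\|T(w)\|,\|T(-w)\|\}(R+\|w\|)$ for each fixed $w$, and Banach--Steinhaus then bounds $\|T(u_n)\|_{V^*}$ directly.
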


For the following result, see \cite[Lemma B.1]{Stam}.
\begin{Lemma}\label{Stamlem}
Let $\phi:[k_0,\infty)\to[0,\infty)$ be a nonnegative and nonincreasing function. Suppose that there exist positive constants $c$, $\ell$, and $m$, with $m>1$, such that
\[
\phi(h)\leq \frac{c}{(h-k)^{\ell}}\,\bigl[\phi(k)\bigr]^m,
\qquad \text{for all } h>k> k_0.
\]
Then
\[
\phi(k_0+d)=0,
\]
where
\[
d^{\ell}
=
c\,\bigl[\phi(k_0)\bigr]^{m-1}
2^{\frac{\ell m}{m-1}}.
\]
\end{Lemma}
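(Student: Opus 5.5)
The proof I have in mind is the standard geometric-iteration argument of Stampacchia. If $\phi(k_0)=0$, then monotonicity gives $\phi(k_0+d)=0$ for any $d\ge 0$ (here $d=0$), so nothing remains to prove. We therefore assume $\phi(k_0)>0$, so that $d>0$.

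Define the increasing sequence of levels
\[
k_n:=k_0+d\bigl(1-2^{-n}\bigr),\qquad n\ge 0,
\]
so that $k_n\uparrow k_0+d$ and $k_{n+1}-k_n=d\,2^{-(n+1)}$. The key claim, proved by induction on $n$, is
\[
\phi(k_n)\le \phi(k_0)\,2^{-\frac{n\ell}{m-1}}\qquad\text{for all } n\ge 0 .
\]
Given the claim, monotonicity gives $0\le \phi(k_0+d)\le \phi(k_n)\to 0$ as $n\to\infty$ (because $\ell>0$ and $m>1$), which is the assertion of Lemma~\ref{Stamlem}.

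The case $n=0$ is trivial. For the inductive step, apply the hypothesis with $h=k_{n+1}$ and $k=k_n$ and use the inductive bound:
\[
\phi(k_{n+1})\le \frac{c\,2^{(n+1)\ell}}{d^{\ell}}\,\phi(k_0)^m\,2^{-\frac{nm\ell}{m-1}}.
\]
Substituting $d^{\ell}=c\,\phi(k_0)^{m-1}2^{\frac{\ell m}{m-1}}$, the right-hand side becomes $\phi(k_0)\,2^{E}$ with
\[
E=(n+1)\ell-\frac{nm\ell}{m-1}-\frac{m\ell}{m-1}
=\frac{\ell\bigl[(n+1)(m-1)-m(n+1)\bigr]}{m-1}
=-\frac{(n+1)\ell}{m-1},
\]
which is exactly the bound claimed at level $n+1$. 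The choice of $d$ is precisely what makes this exponent computation close.

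The only delicate point is that the hypothesis is stated only for $k>k_0$ strictly, so the step $n=0$ (with $k=k_0$) is not directly covered. To handle it, for small $\varepsilon\in(0,d/2)$ we apply the hypothesis with $k=k_0+\varepsilon$ and $h=k_1$, and then use $\phi(k_0+\varepsilon)\le\phi(k_0)$:
\[
\phi(k_1)\le \frac{c}{(d/2-\varepsilon)^{\ell}}\,\phi(k_0)^m .
\]
Letting $\varepsilon\to0$ gives $\phi(k_1)\le c\,(d/2)^{-\ell}\phi(k_0)^m$, which is the same estimate used above. For $n\ge1$ we have $k_n>k_0$, so the hypothesis applies directly and the induction goes through.
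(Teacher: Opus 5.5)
Your proof is correct and is the standard Stampacchia geometric iteration (levels $k_n=k_0+d(1-2^{-n})$, inductive bound $\phi(k_n)\le\phi(k_0)2^{-n\ell/(m-1)}$), which is the argument of the cited source; the paper itself gives no proof of Lemma~\ref{Stamlem} and simply quotes \cite[Lemma B.1]{Stam}. Your $\varepsilon$-limiting step for $n=0$ correctly handles the fact that the hypothesis is stated here only for $k>k_0$.
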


Next, we recall the following algebraic inequality from \cite[Lemma 2.1]{Dama}.

\begin{Lemma}\label{alg}
Let \(1<p<\infty\). Then there exists a constant \(C=C(p)>0\) such that, for every \(a,b\in\mathbb{R}^N\),
\begin{equation}\label{eq:algebraic-inequality}
\left\langle |a|^{p-2}a-|b|^{p-2}b,\; a-b\right\rangle
\ge
C\,\frac{|a-b|^{2}}{(|a|+|b|)^{\,2-p}}.
\end{equation}
\end{Lemma}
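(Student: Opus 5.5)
We argue by the fundamental theorem of calculus along the segment joining $b$ to $a$, combined with a lower bound on the Jacobian of the vector field $V(\xi):=|\xi|^{p-2}\xi$. If $a=b$ both sides vanish (with the convention that the right-hand side is $0$), so assume $a\neq b$. Set $a_\tau:=b+\tau(a-b)$ for $\tau\in[0,1]$. The line through $b$ and $a$ meets the origin in at most one point, so $\tau\mapsto V(a_\tau)$ is $C^1$ except possibly at one value of $\tau$. It is also absolutely continuous on $[0,1]$: near the origin its derivative behaves like $|a_\tau|^{p-2}$, and $\tau\mapsto|a_\tau|^{p-2}$ is integrable on $[0,1]$ because $p-2>-1$. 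Hence
\begin{equation*}
\left\langle V(a)-V(b),a-b\right\rangle=\int_0^1\left\langle DV(a_\tau)(a-b),a-b\right\rangle d\tau .
\end{equation*}

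Next we compute $DV(\xi)=|\xi|^{p-2}\bigl(I+(p-2)\tfrac{\xi\otimes\xi}{|\xi|^2}\bigr)$ for $\xi\neq0$. This symmetric matrix has eigenvalues $|\xi|^{p-2}$, in directions orthogonal to $\xi$, and $(p-1)|\xi|^{p-2}$, in the direction of $\xi$. Therefore $\langle DV(\xi)\eta,\eta\rangle\ge\min\{1,p-1\}|\xi|^{p-2}|\eta|^2$, and
\begin{equation*}
\left\langle V(a)-V(b),a-b\right\rangle\ge \min\{1,p-1\}\,|a-b|^2\int_0^1|a_\tau|^{p-2}\,d\tau .
\end{equation*}
It remains to show $\int_0^1|a_\tau|^{p-2}d\tau\ge c(p)(|a|+|b|)^{p-2}$, and we split into two cases.

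\emph{Case $1<p<2$.} Since $|a_\tau|\le(1-\tau)|b|+\tau|a|\le|a|+|b|$ and the exponent $p-2$ is negative, we get $|a_\tau|^{p-2}\ge(|a|+|b|)^{p-2}$ pointwise. The bound follows with $c=1$.

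\emph{Case $p\ge2$.} Here we need a lower bound on $|a_\tau|$ on a set of $\tau$ of positive measure. By symmetry assume $|a|\ge|b|$, so $|a-b|\le2|a|$. For $\tau\in[3/4,1]$,
\begin{equation*}
|a_\tau|\ge|a|-(1-\tau)|a-b|\ge |a|/2\ge(|a|+|b|)/4 .
\end{equation*}
Integrating only over $[3/4,1]$ gives $\int_0^1|a_\tau|^{p-2}d\tau\ge\tfrac14\,4^{2-p}(|a|+|b|)^{p-2}$.

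Combining the two cases yields \eqref{eq:algebraic-inequality} with $C(p)=\min\{1,p-1\}\min\{1,4^{1-p}\}$. The only delicate point is the justification of the integral representation when $p<2$ and the segment passes through the origin, where $DV$ is singular; this is exactly where the integrability of $|a_\tau|^{p-2}$ (exponent $p-2>-1$) is used. Alternatively, one can perturb $b$ slightly so the segment avoids $0$ and pass to the limit, using continuity of both sides in $b$.
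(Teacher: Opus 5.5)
Your proof is correct: the segment-integral representation (including your absolute-continuity justification when $1<p<2$ and the segment passes through the origin), the eigenvalue bound $\langle DV(\xi)\eta,\eta\rangle\ge\min\{1,p-1\}|\xi|^{p-2}|\eta|^2$, and the two-case lower bound for $\int_0^1|a_\tau|^{p-2}\,d\tau$ all hold. The paper does not prove this lemma and simply cites Damascelli's Lemma 2.1; your argument is the classical proof of that estimate, so nothing is missing.
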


\section{Preliminaries}
For $n\in\mathbb{N}$ and a nonnegative function $f\in L^{1}(\Omega)\setminus\{0\}$, we define
\[
f_n(x):=\min\{f(x),n\}.
\]
We consider the following approximate problem:
\begin{equation}\label{approxeqn}
\mathcal{M}_\alpha u
=
f_n(x)\left(u^{+}+\frac{1}{n}\right)^{-\delta}
\quad \text{in }\Omega,
\qquad
u=0
\quad \text{in }\mathbb{R}^{N}\setminus\Omega.
\end{equation}
The main objective of this section is to establish the existence of a weak solution $u_n$ to \eqref{approxeqn} and to derive suitable \emph{a priori} estimates that will be used in the subsequent analysis. While the following result in Lemma \ref{approx} is stated, in accordance with our standing assumptions, only for $0<\delta<1$, we point out that it actually holds for any $\delta>0$.

\begin{Lemma}\label{approx}
The following assertions hold:
\begin{enumerate}
\item[(i)] For every $n\in\mathbb{N}$, the problem \eqref{approxeqn} admits a unique positive weak solution
$
u_n\in X_\alpha\cap L^{\infty}(\Omega).
$

\item[(ii)] The sequence $\{u_n\}_{n\in\mathbb{N}}$ is monotone increasing, that is,
$
u_n\leq u_{n+1}\quad\text{in }\Omega,\qquad \forall\,n\in\mathbb{N}.
$

\item[(iii)] Moreover, for every $\omega\Subset\Omega$, there exists a constant $C(\omega)>0$, independent of $n$, such that
$
u_n(x)\geq C(\omega)\text{ for a.e. }x\in\omega,\ \forall\,n\in\mathbb{N}.
$
\end{enumerate}
\end{Lemma}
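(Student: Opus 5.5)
For (i) I will fix $n$ and obtain $u_n$ by a Schauder-type fixed point argument that uses the surjectivity Theorem~\ref{MBthm}. Given $v\in L^{p_t}(\Omega)$, the right-hand side $g_v:=f_n(v^++1/n)^{-\delta}$ satisfies $0\le g_v\le n^{1+\delta}$. It therefore lies in $L^\infty(\Omega)$ and defines an element of $X_\alpha^*$, because $X_\alpha\hookrightarrow L^1(\Omega)$ by Theorem~\ref{emb}. Define $T:X_\alpha\to X_\alpha^*$ by
\[
\langle Tu,\varphi\rangle=\int_\Omega H(\nabla u)^{p-1}\nabla H(\nabla u)\nabla\varphi\,w\,dx+\alpha\int_{\mathbb{R}^N}\int_{\mathbb{R}^N}J_p(u(x)-u(y))(\varphi(x)-\varphi(y))\,d\mu .
\]
Lemma~\ref{prop}(i) gives $\langle Tu,u\rangle=\|u\|_{X_\alpha}^p$, so $T$ is coercive. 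Demicontinuity follows from Lemma~\ref{prop}(iii), Hölder's inequality and dominated convergence along a.e.\ convergent subsequences. Strict monotonicity follows from \eqref{alg2} together with the monotonicity of $J_p$. Since $X_\alpha$ is separable and reflexive (it is uniformly convex), Theorem~\ref{MBthm} yields a unique $u=S(v)\in X_\alpha$ with $Tu=g_v$.

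Testing with $u$ gives $\|S(v)\|_{X_\alpha}^{p-1}\le C n^{1+\delta}$ uniformly in $v$. By the compact embedding $X_\alpha\hookrightarrow L^{p_t}(\Omega)$, $S$ maps a large ball of $L^{p_t}$ into a compact subset of itself. Continuity of $S$ comes from strict monotonicity, uniqueness and a subsequence argument. Schauder's theorem then gives a fixed point $u_n$.

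Positivity: testing with $u_n^-$ gives $-\|u_n^-\|^p$ plus nonnegative cross terms on the left, and a nonnegative right-hand side, so $u_n\ge0$. Boundedness: I will test with $(u_n-k)^+$ and use Hölder and Theorem~\ref{emb} to get $\phi(h)\le c(h-k)^{-\ell}\phi(k)^{m}$ with $m>1$, where $\phi(k)=|\{u_n>k\}|$. Lemma~\ref{Stamlem} then gives $u_n\in L^\infty$. The nonlocal term is handled by the inequality $J_p(a-b)((a-k)^+-(b-k)^+)\ge|(a-k)^+-(b-k)^+|^p$. Uniqueness is the standard comparison argument, which is the same as (ii) with $n$ fixed.

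For (ii) I will test the difference of the equations for $u_n$ and $u_{n+1}$ with $(u_n-u_{n+1})^+\in X_\alpha$. On the set $\{u_n>u_{n+1}\}$ we have
\[
f_n(u_n+\tfrac1n)^{-\delta}\le f_{n+1}(u_{n+1}+\tfrac1{n+1})^{-\delta},
\]
so the right-hand side is $\le0$. The left-hand side is $\ge0$: the local part by \eqref{alg2}, and the nonlocal part by the standard inequality
\[
\bigl(J_p(a)-J_p(b)\bigr)\bigl((a_1-b_1)^+-(a_2-b_2)^+\bigr)\ge0,\quad a=a_1-a_2,\ b=b_1-b_2.
\]
The local part is in fact $>0$ unless $\nabla(u_n-u_{n+1})^+=0$, so $(u_n-u_{n+1})^+=0$.

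For (iii), monotonicity reduces the claim to proving $u_1\ge C(\omega)>0$ on each $\omega\Subset\Omega$. The function $u_1$ is a nonnegative, nontrivial supersolution; it is nontrivial because $f\neq0$ and the right-hand side is at least $f_1(\|u_1\|_\infty+1)^{-\delta}$. This is the main obstacle: we need a weak Harnack inequality or strong minimum principle for the weighted mixed anisotropic operator. I would first appeal to the regularity theory of \cite{BG26} for weighted mixed problems, adapted to supersolutions. If no such result is directly available, I would prove a weak Harnack or logarithmic estimate by testing with $(u_1+\varepsilon)^{1-p}\eta^p$. In that estimate the tail term is controlled by $u_1\ge0$ in $\mathbb{R}^N$, and the weighted Poincaré inequality on balls comes from $w\in A_p$. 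A De Giorgi/Moser iteration in the doubling metric measure space $(\mathbb{R}^N,w\,dx)$ would then give $\inf_\omega u_1>0$.
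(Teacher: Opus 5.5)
Your proposal is correct. From (ii) onward it follows the paper: comparison with $(u_n-u_{n+1})^+$, uniqueness by the same comparison, Stampacchia truncation, and reducing (iii) to $u_1$ via monotonicity. For (iii) the paper simply invokes \cite[Theorem~1.6]{BG26} for the nonnegative, nontrivial solution $u_1$, so your fallback weak Harnack argument is not needed.

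The genuine difference is existence in (i). The paper does not freeze the nonlinearity. It builds the regularized singular term into the operator, $\langle S(v),\varphi\rangle=\langle Tv,\varphi\rangle-\int_\Omega f_n(v^++\frac1n)^{-\delta}\varphi\,dx$. Because $t\mapsto(t^++\frac1n)^{-\delta}$ is nonincreasing and bounded by $n^{\delta}$, this perturbation preserves coercivity, demicontinuity and monotonicity. One application of Theorem~\ref{MBthm} then gives $u_n$ directly, with no compact embedding and no continuity of a solution map.

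Your Schauder scheme is the alternative the paper itself points to in Remark~\ref{altrmk}, via Lemma~\ref{auxresult}. It costs more. You need the compact embedding $X_\alpha\hookrightarrow L^{p_t}(\Omega)$. In the continuity step you also need a Minty- or $(S_+)$-type argument to show that a weak limit $u$ of $S(v_k)$ satisfies $Tu=g_v$; your sketch leaves this step implicit. Strict monotonicity only makes that limit unique, which then upgrades subsequential convergence to full convergence. In exchange, your route never uses that the right-hand side is nonincreasing in $u$, so it would also work for bounded Carath\'eodory right-hand sides that are not monotone in $u$. That monotonicity is then needed only for uniqueness and for (ii).

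One sign slip: in the nonnegativity step, the cross terms of the nonlocal integral are nonpositive, not nonnegative. So the left-hand side is at most $-\|u_n^-\|_{X_\alpha}^p$. That inequality, together with the nonnegative right-hand side, is what forces $u_n^-=0$.
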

\begin{proof}
We recall the notation $X_\alpha=W_{0}^{1,p}(\Omega,w)$. We denote the dual space of $X_\alpha$ by $X_\alpha^{*}$. We define the operator $S:X_\alpha\to X_\alpha^{*}$ by
\begin{align*}
\langle S(v),\varphi\rangle
&:= \int_{\Omega}H(\nabla v)^{p-1}\nabla H(\nabla v)\nabla\varphi\,w\,dx \\
&\quad +\alpha \int_{\mathbb{R}^{N}}\int_{\mathbb{R}^{N}}
J_{p}(v(x)-v(y))\bigl(\varphi(x)-\varphi(y)\bigr)\,d\mu \\
&\quad - \int_{\Omega}f_n(x)\left(v^{+}+\frac{1}{n}\right)^{-\delta}\varphi\,dx,
\end{align*}
for all $v,\varphi\in X_\alpha$. By Lemma~\ref{emb}, together with H\"older's inequality, it is readily verified that $S$ is a well-defined operator from $X_\alpha$ into $X_\alpha^{*}$.

\begin{itemize}
\item \textbf{Coercivity:}
By Lemma~\ref{emb} and H\"older's inequality, we obtain
\[
\langle S(v),v\rangle
=
\|v\|_{X_\alpha}^{p}
-
\int_{\Omega}
f_n(x)\left(v^{+}+\frac{1}{n}\right)^{-\delta}v\,dx
\ge
\|v\|_{X_\alpha}^{p}-C\|v\|_{X_\alpha},
\]
for some constant \(C>0\) independent of \(v\). Consequently,
\[
\frac{\langle S(v),v\rangle}{\|v\|_{X_\alpha}}
\ge
\|v\|_{X_\alpha}^{\,p-1}-C
\longrightarrow\infty
\quad\text{as }\|v\|_{X_\alpha}\to\infty.
\]
Since \(1<p<\infty\), it follows that \(S\) is coercive.

\item \textbf{Demicontinuity:}
Let $\{v_k\}_{k\in\mb{N}}\subset X_\alpha$ be such that
$
\|v_k-v\|_{X_\alpha}\longrightarrow0
\quad\text{as }k\to\infty.
$
By Lemma~\ref{emb}, passing to a subsequence if necessary, we may assume that
$
v_k(x)\to v(x)\quad\text{a.e. in }\Omega,
$
and consequently
$
v_k(x)-v_k(y)\to v(x)-v(y)
\quad\text{for a.e. }(x,y)\in\mathbb{R}^{2N}.
$

Taking this pointwise convergence into account along with the boundedness of
\[
\left\{
\frac{J_p(v_k(x)-v_k(y))}{|x-y|^{\frac{sp}{p'}}}
\frac{w(x)^{1/p'}w(y)^{1/p'}}{w(B_{x,y})^{1/p'}}
\right\}_{k\in\mathbb N}
\]
in $L^{p'}(\mathbb{R}^{2N})$, we arrive at
\[
\frac{J_p(v_k(x)-v_k(y))}{|x-y|^{\frac{sp}{p'}}}
\frac{w(x)^{1/p'}w(y)^{1/p'}}{w(B_{x,y})^{1/p'}}
\rightharpoonup
\frac{J_p(v(x)-v(y))}{|x-y|^{\frac{sp}{p'}}}
\frac{w(x)^{1/p'}w(y)^{1/p'}}{w(B_{x,y})^{1/p'}}
\]
weakly in $L^{p'}(\mathbb{R}^{2N})$. Since
\[
\frac{\varphi(x)-\varphi(y)}{|x-y|^{\frac{sp}{p}}}
\frac{w(x)^{1/p}w(y)^{1/p}}{w(B_{x,y})^{1/p}}
\in L^{p}(\mathbb{R}^{2N}),
\]
it follows that
\begin{equation}\label{demi1}
\lim_{k\to\infty}
\int_{\mathbb{R}^{N}}\int_{\mathbb{R}^{N}}
J_p(v_k(x)-v_k(y))
(\varphi(x)-\varphi(y))
\,d\mu
=\int_{\mathbb{R}^{N}}\int_{\mathbb{R}^{N}}
J_p(v(x)-v(y))
(\varphi(x)-\varphi(y))
\,d\mu.
\end{equation}

Since
\[
\nabla v_k\to\nabla v
\quad\text{in }L^p(\Omega,w),
\]
we have, up to a subsequence,
\[
\nabla v_k(x)\to\nabla v(x)
\quad\text{a.e. in }\Omega.
\]
Therefore,
\[
w^{1/p'}
H(\nabla v_k)^{p-1}\nabla H(\nabla v_k)
\to
w^{1/p'}
H(\nabla v)^{p-1}\nabla H(\nabla v)
\quad\text{a.e. in }\Omega.
\]
Furthermore,
\[
\left\|
w^{1/p'}
H(\nabla v_k)^{p-1}\nabla H(\nabla v_k)
\right\|_{L^{p'}(\Omega)}^{p'}
\leq C,
\]
so, by weak compactness,
\[
w^{1/p'}
H(\nabla v_k)^{p-1}\nabla H(\nabla v_k)
\rightharpoonup
w^{1/p'}
H(\nabla v)^{p-1}\nabla H(\nabla v)
\quad\text{weakly in }L^{p'}(\Omega).
\]
Consequently,
\begin{equation}\label{eq:limit_identity}
\lim_{k\to\infty}
\int_{\Omega}
H(\nabla v_k)^{p-1}
\nabla H(\nabla v_k)\nabla\varphi\,w\,dx
=
\int_{\Omega}
H(\nabla v)^{p-1}
\nabla H(\nabla v)\nabla\varphi\,w\,dx.
\end{equation}

Finally, since
\[
0\le
f_n(x)\left(v_k^++\frac1n\right)^{-\delta}
\le
n^{1+\delta},
\]
the Lebesgue's dominated convergence theorem yields
\begin{equation}\label{dctap1}
\lim_{k\to\infty}
\int_{\Omega}
f_n(x)
\left(v_k^++\frac1n\right)^{-\delta}
\varphi\,dx
=
\int_{\Omega}
f_n(x)
\left(v^++\frac1n\right)^{-\delta}
\varphi\,dx,
\qquad
\forall\,\varphi\in X_\alpha.
\end{equation}

Combining \eqref{demi1}, \eqref{eq:limit_identity}, and \eqref{dctap1}, we conclude that
\[
\lim_{k\to\infty}\langle S(v_k),\varphi\rangle
=
\langle S(v),\varphi\rangle,
\qquad
\forall\,\varphi\in X_\alpha,
\]
and hence the operator $S:X_\alpha\to X_\alpha^{*}$ is demicontinuous.

\item \textbf{Monotonicity:}
Let $u_1,u_2\in X_\alpha$. Then
\begin{align*}
&\langle S(u_1)-S(u_2),u_1-u_2\rangle \\
&=
\int_{\Omega}
\Big(
H(\nabla u_1)^{p-1}\nabla H(\nabla u_1)
-
H(\nabla u_2)^{p-1}\nabla H(\nabla u_2)
\Big)(\nabla u_1-\nabla u_2)\,w\,dx \\
&\quad
+\int_{\mathbb{R}^{N}}\int_{\mathbb{R}^{N}}
\Big(
J_p(u_1(x)-u_1(y))
-
J_p(u_2(x)-u_2(y))
\Big)
\Big(
(u_1-u_2)(x)-(u_1-u_2)(y)
\Big)\,d\mu \\
&\quad
-\int_{\Omega}
f_n(x)
\left[
\left(u_1^{+}+\frac1n\right)^{-\delta}
-
\left(u_2^{+}+\frac1n\right)^{-\delta}
\right]
(u_1-u_2)\,dx.
\end{align*}
By the property \eqref{alg2}, the first integral above is nonnegative. Moreover, by Lemma~\ref{alg}, the second integral is also nonnegative. We observe that the third integral above is nonpositive. Consequently,
\[
\langle S(u_1)-S(u_2),u_1-u_2\rangle\ge 0,
\]
which shows that the operator $S$ is monotone.
\end{itemize}

By Theorem~\ref{MBthm}, the operator $S:X_\alpha\to X_\alpha^{*}$ is surjective. Consequently, for every $n\in\mathbb{N}$, there exists a function $u_n\in X_\alpha$ satisfying
\begin{equation}\label{auxeqn}
\begin{split}
&\int_{\Omega}
H(\nabla u_n)^{p-1}\nabla H(\nabla u_n)\nabla\varphi\,w\,dx 
+\int_{\mathbb{R}^{N}}\int_{\mathbb{R}^{N}}
J_p\bigl(u_n(x)-u_n(y)\bigr)
\bigl(\varphi(x)-\varphi(y)\bigr)\,d\mu \\
&\quad
=
\int_{\Omega}
f_n(x)
\left(u_n^{+}+\frac{1}{n}\right)^{-\delta}
\varphi\,dx,
\end{split}
\end{equation}
for every $\varphi\in X_\alpha$.

\medskip
\noindent \textbf{Nonnegativity.}
Taking $\varphi=(u_n)_{-}:=\min\{u_n,0\}$ as a test function in \eqref{auxeqn}, we obtain
\begin{equation}\label{posi}
\begin{split}
&\int_{\Omega}
H(\nabla u_n)^{p-1}\nabla H(\nabla u_n)\nabla (u_n)_{-}\,w\,dx
+\int_{\mathbb{R}^{N}}\int_{\mathbb{R}^{N}}
J_p(u_n(x)-u_n(y))
\bigl((u_n)_{-}(x)-(u_n)_{-}(y)\bigr)\,d\mu \\
&\quad
=
\int_{\Omega}
f_n(x)
\left(u_n^{+}+\frac1n\right)^{-\delta}
(u_n)_{-}\,dx
\le0.
\end{split}
\end{equation}

Moreover, by \cite[Equation~(3.13), page~12]{GU21},
\begin{equation}\label{pos}
J_p(u_n(x)-u_n(y))
\bigl((u_n)_{-}(x)-(u_n)_{-}(y)\bigr)
\ge
\bigl|(u_n)_{-}(x)-(u_n)_{-}(y)\bigr|^{p},
\end{equation}
for every $x,y\in\mathbb{R}^{N}$.

Combining \eqref{posi} and \eqref{pos}, and using Lemma \ref{prop}, we infer that
$
\|(u_n)_{-}\|_{X_\alpha}^{p}=0.
$
Hence,
$
(u_n)_{-}\equiv0,
$
which shows that
\begin{equation}\label{non-neg}
u_n\ge0
\quad\text{a.e. in }\mathbb{R}^{N},
\qquad
\forall\,n\in\mathbb{N}.
\end{equation}

\textbf{Monotonicity and uniqueness.}
Let $n\in\mathbb{N}$ be fixed, and let $u_n,u_{n+1}\in X_\alpha$ denote the corresponding weak solutions of \eqref{approxeqn}. By \eqref{non-neg}, both $u_n$ and $u_{n+1}$ are nonnegative in $\mathbb{R}^{N}$. Moreover, for every $\varphi\in X_\alpha$, we have
\begin{equation}\label{auxeqn11}
\begin{aligned}
&\int_{\Omega}
H(\nabla u_n)^{p-1}\nabla H(\nabla u_n)\nabla\varphi\,w\,dx \\
&\quad
+\int_{\mathbb{R}^{N}}\int_{\mathbb{R}^{N}}
J_p(u_n(x)-u_n(y))
(\varphi(x)-\varphi(y))\,d\mu
=
\int_{\Omega}
f_n(x)
\left(u_n+\frac1n\right)^{-\delta}
\varphi\,dx,
\end{aligned}
\end{equation}
and
\begin{equation}\label{auxeqn21}
\begin{aligned}
&\int_{\Omega}
H(\nabla u_{n+1})^{p-1}\nabla H(\nabla u_{n+1})\nabla\varphi\,w\,dx \\
&\quad
+\int_{\mathbb{R}^{N}}\int_{\mathbb{R}^{N}}
J_p(u_{n+1}(x)-u_{n+1}(y))
(\varphi(x)-\varphi(y))\,d\mu
=
\int_{\Omega}
f_{n+1}(x)
\left(u_{n+1}+\frac1{n+1}\right)^{-\delta}
\varphi\,dx.
\end{aligned}
\end{equation}

Taking
\[
\varphi=(u_n-u_{n+1})^{+}\in X_\alpha
\]
as a test function in both \eqref{auxeqn11} and \eqref{auxeqn21}, subtracting the resulting identities, and using the fact that $f_n\le f_{n+1}$ a.e. in $\Omega$, we obtain
\[
\int_{\Omega}
f_n\left(u_n+\frac1n\right)^{-\delta}\varphi\,dx
-
\int_{\Omega}
f_{n+1}\left(u_{n+1}+\frac1{n+1}\right)^{-\delta}\varphi\,dx
\le0.
\]
Consequently,
\begin{equation}\label{in-neg}
\begin{aligned}
&\int_{\Omega}
\Big(
H(\nabla u_n)^{p-1}\nabla H(\nabla u_n)
-
H(\nabla u_{n+1})^{p-1}\nabla H(\nabla u_{n+1})
\Big)
\nabla\varphi\,w\,dx \\
&\quad
+\int_{\mathbb{R}^{N}}\int_{\mathbb{R}^{N}}
\Big(
J_p(u_n(x)-u_n(y))
-
J_p(u_{n+1}(x)-u_{n+1}(y))
\Big)
(\varphi(x)-\varphi(y))
\,d\mu
\le0.
\end{aligned}
\end{equation}

Arguing as in the proof of \cite[Lemma~9]{LL}, we have
\[
\Big(
J_p(u_n(x)-u_n(y))
-
J_p(u_{n+1}(x)-u_{n+1}(y))
\Big)
(\varphi(x)-\varphi(y))
\ge0
\]
for a.e. $(x,y)\in\mathbb{R}^{2N}$.
Using the property \eqref{alg2}, we arrive at
\[
\Big(H(\nabla u_n)^{p-1}\nabla H(\nabla u_n)
-
H(\nabla u_{n+1})^{p-1}\nabla H(\nabla u_{n+1})
\Big)\nabla (u_n-u_{n+1})^{+}\,w\,dx\geq 0.
\]
Combining the previous two inequalities into \eqref{in-neg}, we arrive at
$
(u_n-u_{n+1})^{+}=0
\quad\text{in }\Omega,
$
that is,
$
u_n\le u_{n+1}
\quad\text{a.e. in }\Omega.
$

The uniqueness of the weak solution $u_n$ follows by an analogous argument.

\noindent
\textbf{Boundedness.}
To establish the boundedness of \(u_n\), for each \(k\ge1\), we define the set
$
A(k):=\{x\in\Omega:\,u_n(x)\ge k\},
$
and choose
$
\varphi_k:=(u_n-k)^+\in X_\alpha
$
as a test function in \eqref{auxeqn}. By using Lemma \ref{prop} and \cite[Equation~(3.9), page~11]{GU21}, we obtain
\begin{equation}\label{tstbd1}
\|\varphi_k\|_{X_\alpha}^{p}
\le
C\int_{\Omega}
f_n(x)
\left(u_n+\frac1n\right)^{-\delta}
\varphi_k\,dx,
\end{equation}
where \(C>0\) is independent of \(k\). Since \(f_n\le n\) and
$
\left(u_n+\frac1n\right)^{-\delta}
\le
n^\delta,
$
it follows from Lemma~\ref{emb} and the continuous embedding
$
X_\alpha\hookrightarrow L^{l}(\Omega),\,l>p,
$
that
\begin{align}
\|\varphi_k\|_{X_\alpha}^{p}
&\le
Cn^{1+\delta}
\int_{A(k)}(u_n-k)\,dx \notag\\
&\le
Cn^{1+\delta}
|A(k)|^{\frac{l-1}{l}}
\|\varphi_k\|_{L^{l}(\Omega)} \notag\le
C|A(k)|^{\frac{l-1}{l}}
\|\varphi_k\|_{X_\alpha},
\label{tstbd2}
\end{align}
where \(C>0\) depends on \(n\), but is independent of \(k\). Consequently,
\begin{equation}\label{new}
\|\varphi_k\|_{X_\alpha}^{p}
\le
C|A(k)|^{\frac{p(l-1)}{l(p-1)}}.
\end{equation}

Now let \(h>k\ge1\). Since
$
u_n-k\ge h-k
\quad\text{on }A(h),
$
and \(A(h)\subset A(k)\), using \eqref{new}, we obtain
\begin{align*}
(h-k)^p|A(h)|^{\frac{p}{l}}
&\le
\left(
\int_{A(h)}(u_n-k)^l\,dx
\right)^{\frac{p}{l}}\\
&\le
\left(
\int_{A(k)}(u_n-k)^l\,dx
\right)^{\frac{p}{l}}
\le
C\|\varphi_k\|_{X_\alpha}^{p}\le
C|A(k)|^{\frac{p(l-1)}{l(p-1)}}.
\end{align*}
Hence,
\[
|A(h)|
\le
\frac{C}{(h-k)^l}
|A(k)|^{\frac{l-1}{p-1}}.
\]
Since
$
\frac{l-1}{p-1}>1,
$
an application of Lemma \ref{Stamlem} yields
$
\|u_n\|_{L^\infty(\Omega)}
\le C,
$
where \(C>0\) depends on \(n\). Therefore,
$
u_n\in L^\infty(\Omega).
$

\medskip

\noindent\textbf{Uniform positivity.}
By \eqref{non-neg}, we have \(u_1\ge0\) in \(\mathbb{R}^{N}\). Since \(u_1=0\) in \(\mathbb{R}^{N}\setminus\Omega\) and \(f\not\equiv0\), it follows that \(u_1\not\equiv0\) in \(\Omega\). Therefore, by \cite[Theorem~1.6]{BG26}, for every compact set \(\omega\Subset\Omega\), there exists a constant \(C(\omega)>0\) such that
$
u_1\ge C(\omega)
\text{ in }\omega.
$
Since the sequence \(\{u_n\}_{n\in\mb{N}}\) is monotone increasing,
$
u_n\ge u_1\text{ in }\Omega,
\,\forall\,n\in\mathbb N.
$
Consequently,
$
u_n(x)\ge C(\omega),
\text{ for a.e. }
x\in\omega,\,n\in\mathbb N,
$
where the constant \(C(\omega)>0\) is independent of \(n\).
\end{proof}

\begin{Remark}\label{rmkapprox}
It follows from Lemma~\ref{approx} that the monotone increasing sequence $\{u_n\}_{n\in\mathbb{N}}$ converges pointwise in $\Omega$. We denote its limit by
\[
u_\delta:=\lim_{n\to\infty}u_n.
\]
In particular,
$
u_\delta\ge u_n
\text{ in }\Omega,\, \forall\,n\in\mathbb{N}.
$
We show that $u_\delta$ will be the required weak solution of the problem \eqref{meqn}.
\end{Remark}

\begin{Remark}\label{altrmk}  
We point out that Lemma~\ref{approx} can alternatively be proved by means of Schauder's fixed point theorem; see, for instance, \cite[Lemma~3.1]{GU21}. For this purpose, the following auxiliary result in Lemma \ref{auxresult}, which concerns a more general nonsingular problem, will be useful. Its proof follows the same lines as that of Lemma~\ref{approx}, the only essential modification being the definition of the operator
\[
S:X_\alpha\to X_\alpha^{*},
\]
given by
\[
\langle S(v),\varphi\rangle
:=
\int_{\Omega}
H(\nabla v)^{p-1}\nabla H(\nabla v)\nabla\varphi\,w\,dx
+\int_{\mathbb{R}^{N}}\int_{\mathbb{R}^{N}}
J_p(v(x)-v(y))
\bigl(\varphi(x)-\varphi(y)\bigr)\,d\mu
-
\int_{\Omega}g\,\varphi\,dx,
\]
for all \(v,\varphi\in X_\alpha\).
\end{Remark}

\begin{Lemma}\label{auxresult}
Let \(g\in L^{\infty}(\Omega)\setminus\{0\}\) be a nonnegative function. Then the problem
\begin{equation}\label{approxnew}
\mathcal{M}_{\alpha}u=g
\quad\text{in }\Omega,\qquad
u>0
\quad\text{in }\Omega,\qquad
u=0
\quad\text{in }\mathbb{R}^{N}\setminus\Omega,
\end{equation}
admits a unique weak solution
$
u\in X_\alpha\cap L^{\infty}(\Omega).
$
Furthermore, for every compact set \(\omega\Subset\Omega\), there exists a constant \(C(\omega)>0\), depending only on \(\omega\), such that
$
u\ge C(\omega)
\quad\text{in }\omega.
$
\end{Lemma}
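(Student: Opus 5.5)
We follow the route of the proof of Lemma~\ref{approx}, with the singular term $f_n(u^++1/n)^{-\delta}$ replaced by the bounded datum $g$. Define $S:X_\alpha\to X_\alpha^*$ as in Remark~\ref{altrmk}. Since $g\in L^\infty(\Omega)$ and $\Omega$ is bounded, Theorem~\ref{emb} and H\"older's inequality give
\[
\Bigl|\int_\Omega g\varphi\,dx\Bigr|\le \|g\|_{L^\infty(\Omega)}|\Omega|^{1-\frac1{p_t}}\|\varphi\|_{L^{p_t}(\Omega)}\le C\|\varphi\|_{X_\alpha}.
\]
Hence $S$ is well defined, and it is coercive, because $\langle S(v),v\rangle\ge \|v\|_{X_\alpha}^p-C\|v\|_{X_\alpha}$.

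Demicontinuity is obtained exactly as in \eqref{demi1} and \eqref{eq:limit_identity}: we use a.e. convergence of the gradients and of the differences $v_k(x)-v_k(y)$, together with boundedness in $L^{p'}$ and weak compactness. The term $\int_\Omega g\varphi\,dx$ does not depend on $v$, so it needs no argument.

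Monotonicity, and in fact strict monotonicity, of the differential part follows from \eqref{alg2} and Lemma~\ref{alg}; the $g$-term cancels in $\langle S(u_1)-S(u_2),u_1-u_2\rangle$. The space $X_\alpha$ is reflexive and separable, being a uniformly convex closure of $C_c^\infty(\Omega)$ embedded isometrically into an $L^p$-type space. Theorem~\ref{MBthm} therefore yields existence of $u\in X_\alpha$ with $S(u)=0$. Strict monotonicity gives uniqueness: if $u_1,u_2$ are two solutions, testing with $u_1-u_2$ gives zero on the left, and strictness forces $u_1=u_2$.

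Next we treat the sign and boundedness of $u$.
\begin{enumerate}
\item[(1)] \emph{Nonnegativity.} Test with $u_-$ as in \eqref{posi}--\eqref{pos}. The right side is $\int_\Omega g\,u_-\,dx\le0$, so $\|u_-\|_{X_\alpha}=0$.
\item[(2)] \emph{Nontriviality.} $u\not\equiv0$, since otherwise $\int_\Omega g\varphi\,dx=0$ for all $\varphi$, which contradicts $g\not\equiv0$.
\item[(3)] \emph{Boundedness.} Use the De Giorgi/Stampacchia iteration of Lemma~\ref{approx}. Testing with $(u-k)^+$ gives $\|\varphi_k\|_{X_\alpha}^p\le \|g\|_{L^\infty(\Omega)}\int_{A(k)}(u-k)\,dx$. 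Choose $l>p$ with $X_\alpha\hookrightarrow L^l(\Omega)$; this is possible because $p_t^*>p$ or $p_t\ge N$, which requires checking $t>N/p$ against $p_t^*$. One then obtains the level-set recursion $|A(h)|\le C(h-k)^{-l}|A(k)|^{\frac{l-1}{p-1}}$. Since $\frac{l-1}{p-1}>1$, Lemma~\ref{Stamlem} gives $u\in L^\infty(\Omega)$.
\end{enumerate}

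Finally, the interior positivity $u\ge C(\omega)$ on each $\omega\Subset\Omega$ is obtained from the weak Harnack/strong minimum principle of \cite[Theorem~1.6]{BG26}, applied to the nonnegative, nontrivial supersolution $u$ (we have $\mathcal M_\alpha u=g\ge0$). The main obstacle is this last step. One must check that \cite{BG26}, which treats weighted mixed operators, covers the anisotropic structure $H$ here, or else reduce to it through the comparability $c_1|\xi|\le H(\xi)\le c_2|\xi|$ and the monotonicity/growth bounds of Lemma~\ref{prop}. If a direct citation is unavailable, we would instead prove a weighted weak Harnack inequality via a Moser iteration with test functions $u^{-q}\eta^p$, using the weighted Sobolev--Poincar\'e inequalities for $A_p$ weights and bounding the nonlocal tail terms from below using $u\ge0$ in $\mathbb R^N$.
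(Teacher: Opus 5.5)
Your proposal is correct and matches the paper's route: the paper proves Lemma~\ref{auxresult} by rerunning the proof of Lemma~\ref{approx} with the operator of Remark~\ref{altrmk}. That means coercivity, demicontinuity, monotonicity and Theorem~\ref{MBthm}, then testing with $u_-$ and $(u-k)^+$ and applying Lemma~\ref{Stamlem}, and it gets interior positivity from the same citation of \cite[Theorem~1.6]{BG26} that you use, so your fallback Moser iteration is not needed. Your explicit checks that $t>N/p$ gives $p_t^*>p$ (hence an admissible $l>p$), that $X_\alpha$ is separable and reflexive, and that strict monotonicity yields uniqueness fill in details the paper leaves implicit.
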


\begin{Lemma}\label{apun2}
Then there exists a constant \(C>0\), independent of \(n\), such that
$
\|u_n\|_{X_\alpha}\le C,
\,\forall\,n\in\mathbb{N}.
$
In particular, the sequence \(\{u_n\}_{n\in\mathbb{N}}\) is uniformly bounded in \(X_\alpha\).
\end{Lemma}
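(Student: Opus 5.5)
The plan is to test the approximate equation \eqref{auxeqn} with $\varphi=u_n$ itself. This is admissible because $u_n\in X_\alpha$, and by Remark \ref{Rmwksol} the identity extends to all test functions in $X_\alpha$. By Lemma \ref{prop}(i), $H(\nabla u_n)^{p-1}\nabla H(\nabla u_n)\cdot\nabla u_n=H(\nabla u_n)^p$, so the left-hand side becomes exactly $\|u_n\|_{X_\alpha}^p$; here we keep the factor $\alpha$ in front of the nonlocal term, as in the definition of $S$. Hence
\[
\|u_n\|_{X_\alpha}^p=\int_\Omega f_n\Big(u_n+\frac1n\Big)^{-\delta}u_n\,dx\le\int_\Omega f\,u_n^{1-\delta}\,dx .
\]
The inequality uses $u_n\ge0$ from \eqref{non-neg}, $f_n\le f$, and $(u_n+1/n)^{-\delta}u_n\le u_n^{1-\delta}$.

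Next we estimate the right-hand side with H\"older's inequality and Theorem \ref{emb}, splitting according to \eqref{m}.

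\begin{itemize}
\item If $1\le p_t<N$, then $m=(p_t^*/(1-\delta))'$. H\"older with exponents $m$ and $p_t^*/(1-\delta)$ gives
\[
\int_\Omega f u_n^{1-\delta}\le \|f\|_{L^m(\Omega)}\|u_n\|_{L^{p_t^*}(\Omega)}^{1-\delta}\le C\|f\|_{L^m(\Omega)}\|u_n\|_{X_\alpha}^{1-\delta}.
\]
The last step uses the continuous embedding $X_\alpha\hookrightarrow W^{1,p_t}_0(\Omega)\hookrightarrow L^{p_t^*}(\Omega)$. Note that the $W^{1,p}(\Omega,w)$ norm is controlled by the $X_\alpha$ norm through Poincar\'e (via the unweighted embedding), or simply because the embedding is stated for $W^{1,p}_0(\Omega,w)$.
\item If $p_t=N$ and $m=l>1$, use H\"older with $l$ and $l'(1-\delta)$. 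Since $X_\alpha\hookrightarrow L^q(\Omega)$ for every finite $q$, the same bound follows.
\item If $p_t>N$ and $m=1$, use $X_\alpha\hookrightarrow C(\overline\Omega)$. This gives $\int_\Omega f u_n^{1-\delta}\le\|f\|_{L^1(\Omega)}\|u_n\|_{L^\infty(\Omega)}^{1-\delta}\le C\|u_n\|_{X_\alpha}^{1-\delta}$.
\end{itemize}

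In every case we obtain $\|u_n\|_{X_\alpha}^p\le C\|u_n\|_{X_\alpha}^{1-\delta}$ with $C$ independent of $n$. Since $p>1>1-\delta$, either $u_n=0$ (excluded by positivity) or $\|u_n\|_{X_\alpha}\le C^{1/(p-1+\delta)}$, which is the claim.

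The only delicate point is making sure the embedding constant in Theorem \ref{emb} is expressed in terms of $\|\cdot\|_{X_\alpha}$ and not the full $W^{1,p}(\Omega,w)$ norm. I would handle this by noting that for $u\in W^{1,p}_0(\Omega,w)$ the embedding into $W^{1,p_t}_0(\Omega)$ is controlled by $\|\nabla u\|_{L^p(\Omega,w)}$ alone. Indeed, H\"older with $w^{-t}\in L^1(\Omega)$ gives $\int_\Omega|\nabla u|^{p_t}\le \|w^{-t}\|_{L^1(\Omega)}^{1/(t+1)}\big(\int_\Omega|\nabla u|^p w\big)^{t/(t+1)}$. The classical Sobolev inequality in $W^{1,p_t}_0(\Omega)$ then applies, together with $H(\xi)\ge c_1|\xi|$.
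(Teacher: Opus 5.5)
Your proposal is correct and follows the paper's own proof. You test \eqref{auxeqn} with $u_n$, use $f_n\le f$ and $(u_n+\frac1n)^{-\delta}u_n\le u_n^{1-\delta}$ to get $\|u_n\|_{X_\alpha}^p\le\int_\Omega f\,u_n^{1-\delta}\,dx$, then apply H\"older with $(1-\delta)m'=p_t^*$ and the weighted embedding, and absorb since $1-\delta<p$. Your explicit handling of the cases $p_t\ge N$ and of the factor $\alpha$ dropped in \eqref{auxeqn} fills in details the paper leaves implicit, as does your remark that the embedding constant depends only on the $X_\alpha$-norm (via H\"older with $w^{-t}\in L^1(\Omega)$).
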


\begin{proof}
First we note that \[
\left(u_n+\frac1n\right)^{-\delta}u_n
\le
u_n^{\,1-\delta},
\]
and \(f_n\le f\). Therefore, taking \(u_n\) as a test function in \eqref{auxeqn} and further using Lemma \ref{prop}, we obtain
\[
\|u_n\|_{X_\alpha}^{p}
\le
\int_{\Omega}
f\,u_n^{\,1-\delta}\,dx.
\]

Assume first that \(p_t<N\). Since \(f\in L^{m}(\Omega)\) and
$
(1-\delta)m'=p_t^{*},
$
H\"older's inequality together with Lemma~\ref{emb} yields
\begin{align*}
\|u_n\|_{X_\alpha}^{p}
&\le
\|f\|_{L^{m}(\Omega)}
\left(
\int_{\Omega}
u_n^{(1-\delta)m'}\,dx
\right)^{\frac1{m'}} \\
&=
\|f\|_{L^{m}(\Omega)}
\left(
\int_{\Omega}
u_n^{p_t^{*}}\,dx
\right)^{\frac{1-\delta}{p_t^{*}}} \\
&\le
C
\|f\|_{L^{m}(\Omega)}
\|u_n\|_{X_\alpha}^{\,1-\delta},
\end{align*}
where \(C>0\) is independent of \(n\). Hence,
$
\|u_n\|_{X_\alpha}\le C,
$
for some constant \(C>0\) independent of \(n\).

If \(p_t\ge N\), the conclusion follows analogously by using the corresponding embedding result in Lemma~\ref{emb}. Therefore, the sequence \(\{u_n\}_{n\in\mathbb{N}}\) is uniformly bounded in \(X_\alpha\).
\end{proof}

\begin{Lemma}\label{lemma1}
For every \(n\in\mathbb{N}\) and every \(\varphi\in X_\alpha\), the following estimate holds:
\begin{equation}\label{prop1}
\|u_n\|_{X_\alpha}^{p}
\le
\|\varphi\|_{X_\alpha}^{p}
+
p\int_{\Omega}
(u_n-\varphi)
\left(u_n+\frac1n\right)^{-\delta}
f_n\,dx.
\end{equation}
Furthermore, the sequence \(\{\|u_n\|_{X_\alpha}\}_{n\in\mathbb{N}}\) is nondecreasing, that is,
\begin{equation}\label{nmon}
\|u_n\|_{X_\alpha}
\le
\|u_{n+1}\|_{X_\alpha},
\qquad
\forall\,n\in\mathbb{N}.
\end{equation}
\end{Lemma}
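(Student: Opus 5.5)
The estimate \eqref{prop1} will come from convexity of the energy. Set
\[
E(v):=\int_{\Omega}H(\nabla v)^p\,w\,dx+\alpha[v]_{w,s,p}^p=\|v\|_{X_\alpha}^p ,
\]
which is convex and G\^ateaux differentiable on $X_\alpha$. By Lemma~\ref{prop}(i), the map $\xi\mapsto H(\xi)^p$ is $C^1$ with gradient $pH(\xi)^{p-1}\nabla H(\xi)$. Hence
\[
\langle E'(u),\varphi\rangle = p\Big(\int_\Omega H(\nabla u)^{p-1}\nabla H(\nabla u)\nabla\varphi\, w\,dx+\alpha\int\!\!\int J_p(u(x)-u(y))(\varphi(x)-\varphi(y))\,d\mu\Big).
\]
To avoid differentiability issues I will work pointwise instead. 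Convexity of $\xi\mapsto H(\xi)^p$ gives
\[
H(\eta)^p\ge H(\xi)^p+pH(\xi)^{p-1}\nabla H(\xi)\cdot(\eta-\xi),
\]
with the case $\xi=0$ trivial. Convexity of $t\mapsto|t|^p$ gives
\[
|b|^p\ge|a|^p+pJ_p(a)(b-a).
\]
I apply the first with $\xi=\nabla u_n$, $\eta=\nabla\varphi$, multiply by $w$ and integrate. I apply the second with $a=u_n(x)-u_n(y)$, $b=\varphi(x)-\varphi(y)$, multiply by $\alpha$ and integrate against $d\mu$. Adding the two gives
\[
\|\varphi\|_{X_\alpha}^p\ge\|u_n\|_{X_\alpha}^p+p\,\langle \mathcal{A}(u_n),\varphi-u_n\rangle,
\]
where $\mathcal{A}$ is the operator on the left of \eqref{auxeqn}. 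All terms are finite by Hölder's inequality, since $u_n,\varphi\in X_\alpha$.

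Next I test \eqref{auxeqn} with $\varphi-u_n\in X_\alpha$, which is allowed since \eqref{auxeqn} holds for all test functions in $X_\alpha$, and use $u_n^+=u_n$ from \eqref{non-neg}. This gives
\[
\langle\mathcal{A}(u_n),\varphi-u_n\rangle=\int_\Omega f_n(u_n+\tfrac1n)^{-\delta}(\varphi-u_n)\,dx .
\]
The right-hand side is finite because $f_n(u_n+1/n)^{-\delta}\le n^{1+\delta}$. Substituting and rearranging yields \eqref{prop1}. One caveat: \eqref{auxeqn} as written lacks the factor $\alpha$ on the nonlocal term, so I will take $\alpha$ consistently in both places; equivalently, $u_n$ solves the problem with $\mathcal M_\alpha$ as in \eqref{approxeqn}.

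For \eqref{nmon}, I apply \eqref{prop1} at level $n$ with $\varphi=u_{n+1}$:
\[
\|u_n\|^p\le\|u_{n+1}\|^p+p\int_\Omega(u_n-u_{n+1})(u_n+\tfrac1n)^{-\delta}f_n\,dx .
\]
By Lemma~\ref{approx}(ii), $u_n\le u_{n+1}$, and the weight $(u_n+\frac1n)^{-\delta}f_n$ is nonnegative, so the integral is $\le0$. This gives $\|u_n\|_{X_\alpha}\le\|u_{n+1}\|_{X_\alpha}$.

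The only delicate point is justifying the convexity inequality for $H^p$, including at $\xi=0$ where $H$ is not differentiable. The function $H^p$ is $C^1$ on all of $\mathbb{R}^N$ when $p>1$, because $H(\xi)^{p-1}|\nabla H(\xi)|\le C|\xi|^{p-1}\to0$ by Lemma~\ref{prop}(iii). Convexity of $H^p$ follows since it is the composition of the convex nonnegative $H$ with the increasing convex function $t\mapsto t^p$. A $C^1$ convex function lies above its tangent planes, which is exactly the inequality needed.
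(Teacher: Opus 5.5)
Your proof is correct and is essentially the paper's argument. The paper observes, via Lemma~\ref{auxresult}, that $u_n$ minimizes the frozen convex functional $J(\varphi)=\frac1p\|\varphi\|_{X_\alpha}^p-\int_\Omega f_n(u_n+\frac1n)^{-\delta}\varphi\,dx$ and reads off \eqref{prop1} from $J(u_n)\le J(\varphi)$; you prove that same inequality directly from the tangent-plane inequalities for $H^p$ and $|t|^p$ combined with the weak formulation \eqref{auxeqn}, which is slightly more self-contained. Your remark that the factor $\alpha$ is missing on the nonlocal term in \eqref{auxeqn} is also correct, and inserting it consistently, as the operator $S$ in Lemma~\ref{approx} does, is the right fix.
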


\begin{proof}
Let \(h\in X_\alpha\). By Lemma~\ref{auxresult}, there exists a unique weak solution
$
v\in X_\alpha
$
of the problem
\[
\mathcal{M}_{\alpha}v
=
f_n(x)\left(h^{+}+\frac1n\right)^{-\delta}
\quad\text{in }\Omega,
\qquad
v>0
\quad\text{in }\Omega,
\qquad
v=0
\quad\text{in }\mathbb{R}^{N}\setminus\Omega.
\]
Moreover, we observe that \(v\) is the unique minimizer of the functional
\[
J:X_\alpha\to\mathbb{R},
\]
defined by
\[
J(\varphi)
:=
\frac1p\|\varphi\|_{X_\alpha}^{p}
-
\int_{\Omega}
f_n
\left(h^{+}+\frac1n\right)^{-\delta}
\varphi\,dx.
\]
Therefore,
\[
J(v)\le J(\varphi),
\qquad
\forall\,\varphi\in X_\alpha,
\]
which is equivalent to
\begin{equation}\label{mineqn}
\frac1p\|v\|_{X_\alpha}^{p}
-
\int_{\Omega}
f_n
\left(h^{+}+\frac1n\right)^{-\delta}
v\,dx
\le
\frac1p\|\varphi\|_{X_\alpha}^{p}
-
\int_{\Omega}
f_n
\left(h^{+}+\frac1n\right)^{-\delta}
\varphi\,dx.
\end{equation}

Choosing \(v=h=u_n\) in \eqref{mineqn} immediately yields \eqref{prop1}. Finally, taking \(\phi=u_{n+1}\) in \eqref{prop1} and using the monotonicity property \(u_n\le u_{n+1}\) established in Lemma~\ref{approx}, we obtain
$
\|u_n\|_{X_\alpha}\le \|u_{n+1}\|_{X_\alpha},
$
which proves \eqref{nmon}.
\end{proof}

\begin{Lemma}\label{strong}
Up to a subsequence, the sequence \(\{u_n\}_{n\in\mathbb{N}}\) converges strongly to \(u_\delta\) in \(X_\alpha\).
\end{Lemma}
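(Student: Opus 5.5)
The strategy is to combine weak compactness with the norm information in Lemma~\ref{lemma1}, and then upgrade weak convergence to strong convergence using uniform convexity of $X_\alpha$.

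\textbf{Step 1 (weak limit).} By Lemma~\ref{apun2}, $\{u_n\}$ is bounded in the reflexive space $X_\alpha$. Hence, up to a subsequence, $u_n\rightharpoonup v$ weakly in $X_\alpha$. By the compact part of Theorem~\ref{emb}, $u_n\to v$ strongly in $L^{q}(\Omega)$ for a subcritical $q$ and a.e. in $\Omega$. Since $u_n\uparrow u_\delta$ pointwise (Remark~\ref{rmkapprox}), we get $v=u_\delta$, so $u_\delta\in X_\alpha$. Weak lower semicontinuity of the norm then gives
\[
\|u_\delta\|_{X_\alpha}\le\liminf_{n\to\infty}\|u_n\|_{X_\alpha}.
\]

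\textbf{Step 2 (reverse inequality).} Apply \eqref{prop1} with $\varphi=u_\delta$:
\[
\|u_n\|_{X_\alpha}^p\le\|u_\delta\|_{X_\alpha}^p+p\int_\Omega (u_n-u_\delta)\Big(u_n+\frac1n\Big)^{-\delta}f_n\,dx .
\]
Because $u_n\le u_\delta$, the integrand is nonpositive, so $\|u_n\|_{X_\alpha}\le\|u_\delta\|_{X_\alpha}$ for every $n$. This sign argument needs no integrability of the integral term, since it is simply $\le 0$. If one worries about its finiteness, it is bounded below by $-\int_\Omega f\,u_\delta^{1-\delta}\,dx$, because
\[
\Big(u_n+\tfrac1n\Big)^{-\delta}(u_\delta-u_n)\le u_n^{-\delta}(u_\delta-u_n)
\]
and this can be controlled. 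In any case the sign alone suffices. Together with Step 1 and the monotonicity \eqref{nmon}, we obtain
\[
\lim_{n\to\infty}\|u_n\|_{X_\alpha}=\|u_\delta\|_{X_\alpha}.
\]

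\textbf{Step 3 (strong convergence).} $X_\alpha$ is uniformly convex, since it embeds isometrically into a product of weighted $L^p$ spaces via $(\nabla u,\ \text{difference quotient})$, given that $H$ is a strictly convex norm. Weak convergence plus convergence of norms therefore yields $u_n\to u_\delta$ strongly in $X_\alpha$ by the Radon--Riesz property.

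The main obstacle is justifying uniform convexity for the anisotropic norm $\big(\int_\Omega H(\nabla u)^p w\,dx\big)^{1/p}$, since strict convexity of $H$ alone does not obviously give uniform convexity. The paper asserts this property, and I would rely on it. If that were unavailable, the alternative is to test the difference of the equations with $u_n-u_\delta$ and use \eqref{alg2} and Lemma~\ref{alg} to get a.e. convergence of gradients and then Vitali's theorem. That route is messier, so the norm argument is preferred.
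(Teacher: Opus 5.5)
Your proposal is correct and is essentially the paper's argument: the bound $\|u_n\|_{X_\alpha}\le\|u_\delta\|_{X_\alpha}$ from \eqref{prop1} with $\varphi=u_\delta$ and $u_n\le u_\delta$, then weak lower semicontinuity, then uniform convexity. Your ordering is slightly cleaner, since you identify the weak limit as $u_\delta$, and so get $u_\delta\in X_\alpha$, before testing \eqref{prop1} with it. Your parenthetical lower bound by $-\int_\Omega f\,u_\delta^{1-\delta}\,dx$ is not justified as written, but it is not needed, because $f_n(u_n+\frac1n)^{-\delta}\le n^{1+\delta}$ already makes the integral finite. Your worry in Step 3 is also unnecessary: a strictly convex norm $H$ on $\mathbb{R}^N$ is automatically uniformly convex, so Day's theorem on $L^p$-Bochner spaces and $\ell^p$-sums gives uniform convexity of $X_\alpha$.
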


\begin{proof}
Since \(u_n\le u_\delta\) in \(\Omega\), taking \(\phi=u_\delta\) in \eqref{prop1} yields
\[
\|u_n\|_{X_\alpha}\le \|u_\delta\|_{X_\alpha},
\qquad \forall\,n\in\mathbb{N}.
\]
Together with the monotonicity property \eqref{nmon} established in Lemma~\ref{lemma1}, this implies
\begin{equation}\label{lim1}
\lim_{n\to\infty}\|u_n\|_{X_\alpha}
\le
\|u_\delta\|_{X_\alpha}.
\end{equation}

On the other hand, by Lemma~\ref{apun2}, the sequence \(\{u_n\}_{n\in\mathbb{N}}\) is uniformly bounded in \(X_\alpha\). Hence, passing to a subsequence if necessary,
$
u_n\rightharpoonup u_\delta
\text{ weakly in }X_\alpha.
$
By the weak lower semicontinuity of the norm,
\begin{equation}\label{lim2}
\|u_\delta\|_{X_\alpha}
\le
\liminf_{n\to\infty}\|u_n\|_{X_\alpha}.
\end{equation}
Combining \eqref{lim1} and \eqref{lim2}, we conclude that
$
\lim_{n\to\infty}\|u_n\|_{X_\alpha}
=
\|u_\delta\|_{X_\alpha}.
$
Since \(X_\alpha\), equipped with the norm \(\|\cdot\|_{X_\alpha}\), is uniformly convex, weak convergence together with convergence of the norms implies strong convergence. Therefore,
$
u_n\to u_\delta
\text{ strongly in }X_\alpha,
$
which completes the proof.
\end{proof}

\begin{Lemma}\label{minprop}
Define the functional \(I_\delta:X_\alpha\to\mathbb{R}\) by
\[
I_\delta(v)
:=
\frac1p\|v\|_{X_\alpha}^{p}
-
\frac{1}{1-\delta}
\int_{\Omega}(v^{+})^{1-\delta}f\,dx.
\]
Then \(u_\delta\) is a global minimizer of \(I_\delta\) over \(X_\alpha\).
\end{Lemma}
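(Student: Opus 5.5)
Following the approach of \cite{Boc-Or,Nmn}, the plan is to pass to the limit in the minimality inequality \eqref{mineqn} of Lemma~\ref{lemma1}, after first rewriting it in a form better suited to the functional $I_\delta$. Fix $n$ and $\varphi\in X_\alpha$. Since $u_n$ is the unique minimizer of
\[
J_n(\psi):=\frac1p\|\psi\|_{X_\alpha}^p-\int_\Omega f_n\Big(u_n+\frac1n\Big)^{-\delta}\psi\,dx ,
\]
we use instead the convex primitive
\[
G_n(t):=\frac{1}{1-\delta}\Big(t^++\frac1n\Big)^{1-\delta}-\frac{1}{1-\delta}\Big(\frac1n\Big)^{1-\delta},
\]
whose derivative is $G_n'(t)=(t^++\frac1n)^{-\delta}$ for $t>0$. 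The function $G_n$ is concave on $[0,\infty)$ and constant for $t\le 0$. Since $u_n\ge0$, concavity gives, pointwise a.e.,
\[
G_n(\varphi)-G_n(u_n)\le \Big(u_n+\frac1n\Big)^{-\delta}(\varphi^+-u_n)\le \Big(u_n+\frac1n\Big)^{-\delta}(\varphi-u_n)+\Big(u_n+\frac1n\Big)^{-\delta}\varphi^- .
\]
The extra term $\varphi^-$ is awkward. To avoid it, I first reduce to $\varphi\ge0$. The norm satisfies $\|\,|\varphi|\,\|_{X_\alpha}\le\|\varphi\|_{X_\alpha}$: for the local part $H(\nabla|\varphi|)=H(\nabla\varphi)$ by the evenness $H(-\xi)=H(\xi)$, and for the nonlocal part $\big||\varphi(x)|-|\varphi(y)|\big|\le|\varphi(x)-\varphi(y)|$. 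The integrand $(\varphi^+)^{1-\delta}$ only increases when $\varphi$ is replaced by $|\varphi|$. Hence $I_\delta(|\varphi|)\le I_\delta(\varphi)$, and it suffices to treat $\varphi\ge0$.

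For $\varphi\ge0$, multiplying the concavity inequality by $f_n$, integrating, and combining with $J_n(u_n)\le J_n(\varphi)$ yields the approximate minimality
\[
\frac1p\|u_n\|_{X_\alpha}^p-\int_\Omega f_nG_n(u_n)\,dx\le \frac1p\|\varphi\|_{X_\alpha}^p-\int_\Omega f_nG_n(\varphi)\,dx .
\]
The next step is to let $n\to\infty$. By Lemma~\ref{strong}, $\|u_n\|_{X_\alpha}\to\|u_\delta\|_{X_\alpha}$ along a subsequence, and the whole sequence can be used since the norms are monotone by \eqref{nmon}. For the integral terms we have pointwise convergence $f_nG_n(u_n)\to \frac{1}{1-\delta}f u_\delta^{1-\delta}$ and $f_nG_n(\varphi)\to\frac1{1-\delta}f\varphi^{1-\delta}$. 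For domination, subadditivity of $t\mapsto t^{1-\delta}$ gives
\[
0\le G_n(t)\le \frac{1}{1-\delta}\,t^{1-\delta}\quad (t\ge0),
\]
so $f_nG_n(u_n)\le \frac1{1-\delta}f u_\delta^{1-\delta}$ by monotonicity of $u_n$. This dominating function is integrable by H\"older's inequality and Theorem~\ref{emb}, since $u_\delta\in X_\alpha\hookrightarrow L^{p_t^*}$ and $(1-\delta)m'=p_t^*$; the other cases of \eqref{m} are handled similarly. The same bound works for $\varphi$. Dominated convergence then gives $I_\delta(u_\delta)\le I_\delta(\varphi)$.

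The main obstacle is the nonnegativity reduction together with the integrability of $f|\varphi|^{1-\delta}$. The latter requires exactly the exponent choice \eqref{m} and the weighted embedding, which need care in the borderline cases $p_t\ge N$. The reduction relies on the evenness of $H$, which holds since $H(t\xi)=|t|H(\xi)$. A small technical point is also to check that the subsequence in Lemma~\ref{strong} does not matter, which follows from the monotone convergence of the norms.
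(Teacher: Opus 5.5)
Your proof is correct. Its skeleton matches the paper's: an $n$-level minimality inequality for $u_n$ built from a primitive $G_n$, followed by passage to the limit using the norm convergence of Lemma~\ref{strong}, the monotonicity \eqref{nmon} and dominated convergence.

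The difference is in how you obtain the $n$-level minimality. The paper sets $I_n(v)=\frac1p\|v\|_{X_\alpha}^p-\int_\Omega G_n(v)f_n\,dx$ with $G_n(t)=\frac{1}{1-\delta}(t^++\frac1n)^{1-\delta}-(\frac1n)^{-\delta}t^-$. It asserts that $I_n$ has a global minimizer $v_n$, shows $v_n\ge0$ so that $v_n$ solves \eqref{approxeqn}, and identifies $v_n=u_n$ through the uniqueness in Lemma~\ref{approx}. It then tests with $v^+$ and uses $\|v^+\|_{X_\alpha}\le\|v\|_{X_\alpha}$.

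You instead get $I_n(u_n)\le I_n(\varphi)$ for $\varphi\ge0$ directly. You combine the minimality of $u_n$ for the frozen functional $J_n$ from Lemma~\ref{lemma1} with the tangent-line inequality for the concave $G_n$. This avoids any separate existence and identification argument for a minimizer of $I_n$.

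Both routes actually rest on the same convexity. The paper's $G_n$ is $C^1$ and concave on all of $\mathbb{R}$: its slope $n^{\delta}$ on $(-\infty,0)$ matches $(t+\frac1n)^{-\delta}$ at $0^+$. Hence $I_n$ is convex, and its critical point $u_n$ is automatically the global minimizer. If you used that linear extension instead of a constant one, your tangent inequality would hold for every $\varphi\in X_\alpha$, and the reduction to $|\varphi|$ would be unnecessary.

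In the other direction, your normalization $G_n(0)=0$ gives the clean domination $0\le f_nG_n(u_n)\le\frac{1}{1-\delta}fu_\delta^{1-\delta}$. The paper's $G_n$ needs a bound such as $f(u_\delta+1)^{1-\delta}$, and hence also $f\in L^1(\Omega)$.

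One wording slip: you call $G_n$ a ``convex primitive'' but then correctly use that it is concave on $[0,\infty)$.
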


\begin{proof}
For each \(n\in\mathbb{N}\), consider the functional
$
I_n:X_\alpha\to\mathbb{R},
$
defined by
\[
I_n(v)
:=
\frac1p\|v\|_{X_\alpha}^{p}
-
\int_{\Omega}G_n(v)f_n\,dx,
\]
where
\[
G_n(t)
:=
\frac{1}{1-\delta}
\left(t^{+}+\frac1n\right)^{1-\delta}
-
\left(\frac1n\right)^{-\delta}t^{-}.
\]
It is readily verified that \(I_n\) is coercive, weakly lower semicontinuous, and of class \(C^{1}\). Consequently, \(I_n\) admits a global minimizer \(v_n\in X_\alpha\), which satisfies
$
\langle I_n'(v_n),\phi\rangle=0,
\,
\forall\,\phi\in X_\alpha.
$

Since
$
I_n(v_n)\le I_n(v_n^{+}),
$
we necessarily have \(v_n\ge0\) in \(\Omega\). Therefore, \(v_n\) is a weak solution of the approximate problem \eqref{approxeqn}. By the uniqueness statement in Lemma~\ref{approx}, it follows that
$
v_n=u_n.
$
Hence, \(u_n\) is the unique minimizer of \(I_n\), and therefore
\begin{equation}\label{min}
I_n(u_n)\le I_n(v^{+}),
\qquad
\forall\,v\in X_\alpha.
\end{equation}

Since \(u_n\uparrow u_\delta\) a.e. in \(\Omega\) and \(f_n\uparrow f\), the Lebesgue's dominated convergence theorem yields
\[
\lim_{n\to\infty}
\int_{\Omega}G_n(u_n)f_n\,dx
=
\frac{1}{1-\delta}
\int_{\Omega}
u_\delta^{\,1-\delta}f\,dx.
\]
Moreover, by Lemma~\ref{strong},
$
u_n\to u_\delta
\quad\text{strongly in }X_\alpha,
$
and hence
$
\lim_{n\to\infty}\|u_n\|_{X_\alpha}
=
\|u_\delta\|_{X_\alpha}.
$
Consequently,
\begin{equation}\label{newlim2}
\lim_{n\to\infty}I_n(u_n)
=
I_\delta(u_\delta).
\end{equation}

On the other hand, for every \(v\in X_\alpha\), another application of the Lebesgue's dominated convergence theorem gives
\begin{equation}\label{lim3}
\lim_{n\to\infty}
\int_{\Omega}G_n(v^{+})f_n\,dx
=
\frac{1}{1-\delta}
\int_{\Omega}(v^{+})^{1-\delta}f\,dx.
\end{equation}

Since \(\|v^{+}\|_{X_\alpha}\le\|v\|_{X_\alpha}\), passing to the limit in \eqref{min} with the aid of \eqref{newlim2} and \eqref{lim3} yields
$
I_\delta(u_\delta)
\le
I_\delta(v),
\,
\forall\,v\in X_\alpha.
$
Thus, \(u_\delta\) is a minimizer of \(I_\delta\), completing the proof.
\end{proof}

\section{Proof of the main results}

\begin{proof}[Proof of Theorem \ref{thm1}]
By Lemma~\ref{strong}, passing to a further subsequence if necessary, we may assume that
\[
\nabla u_n(x)\to\nabla u_\delta(x)
\quad\text{for a.e. }x\in\Omega.
\]
Consequently,
\[
w(x)^{\frac1{p'}}
H(\nabla u_n(x))^{p-1}
\nabla H(\nabla u_n(x))
\to
w(x)^{\frac1{p'}}
H(\nabla u_\delta(x))^{p-1}
\nabla H(\nabla u_\delta(x))
\quad\text{for a.e. }x\in\Omega.
\]

Moreover,
\[
\left\|
w^{\frac1{p'}}
H(\nabla u_n)^{p-1}
\nabla H(\nabla u_n)
\right\|_{L^{p'}(\Omega)}^{p'}
\leq
C,
\]
where \(C>0\) is a constant independent of \(n\). Hence, the sequence
\[
\left\{
w^{\frac1{p'}}
H(\nabla u_n)^{p-1}
\nabla H(\nabla u_n)
\right\}_{n\in\mathbb{N}}
\]
is bounded in \(L^{p'}(\Omega)\). Therefore, by weak compactness and the pointwise convergence above,
\[
w^{\frac1{p'}}
H(\nabla u_n)^{p-1}
\nabla H(\nabla u_n)
\rightharpoonup
w^{\frac1{p'}}
H(\nabla u_\delta)^{p-1}
\nabla H(\nabla u_\delta)
\quad\text{weakly in }L^{p'}(\Omega).
\]

Let \(\varphi\in C_c^{1}(\Omega)\), then
$
w^{\frac1p}\nabla\varphi\in L^p(\Omega).
$
It therefore follows from the above weak convergence that
\begin{equation}\label{l1}
\lim_{n\to\infty}
\int_{\Omega}
H(\nabla u_n)^{p-1}
\nabla H(\nabla u_n)
\nabla\varphi\,w\,dx
=
\int_{\Omega}
H(\nabla u_\delta)^{p-1}
\nabla H(\nabla u_\delta)
\nabla\varphi\,w\,dx.
\end{equation}
Since $\phi \in C_c^{1}(\Omega)$ and the sequence $\{u_n\}_{n\in\mathbb{N}}$ is uniformly bounded in $X_\alpha$, it follows that
\[
\left\{
\frac{J_p(u_n(x)-u_n(y))}{|x-y|^{\frac{sp}{p'}}}
\frac{w(x)^{\frac1{p'}}w(y)^{\frac1{p'}}}{w(B_{x,y})^{\frac1{p'}}}
\right\}_{n\in\mathbb{N}}
\]
is bounded in $L^{p'}(\mathbb{R}^{2N})$. Moreover,
\[
\frac{\phi(x)-\phi(y)}{|x-y|^{\frac{sp}{p}}}
\frac{w(x)^{\frac1p}w(y)^{\frac1p}}{w(B_{x,y})^{\frac1p}}
\in L^{p}(\mathbb{R}^{2N}).
\]
Therefore, by the weak convergence established above, we obtain
\begin{equation}\label{l2}
\lim_{n\to\infty}
\int_{\mathbb{R}^{N}}\int_{\mathbb{R}^{N}}
J_p(u_n(x)-u_n(y))
\bigl(\phi(x)-\phi(y)\bigr)\,d\mu
=
\int_{\mathbb{R}^{N}}\int_{\mathbb{R}^{N}}
J_p(u_\delta(x)-u_\delta(y))
\bigl(\phi(x)-\phi(y)\bigr)\,d\mu.
\end{equation}
Moreover, by Lemma \ref{approx}, we have
$
u_n \geq c(\omega) > 0 \text{ for every } \omega \Subset \Omega.
$
Hence, for every $\phi \in C_c^1(\Omega)$,
\[
\left|\frac{f_n \phi}{\left(u_n+\frac{1}{n}\right)^\delta}\right|
\leq
\frac{\|\phi\|_{\infty}}{c(\omega)^\delta}\,f
\in L^1(\Omega).
\]
Furthermore,
\[
\frac{f_n}{\left(u_n+\frac{1}{n}\right)^\delta}\,\varphi
\longrightarrow
\frac{f}{u^\delta}\,\varphi
\quad \text{pointwise almost everywhere in } \Omega
\quad \text{as } n\to\infty.
\]
Therefore, by the Lebesgue's dominated convergence theorem,
\begin{equation}\label{r}
\lim_{n\to\infty}
\int_{\Omega}
\frac{f_n \varphi}{\left(u_n+\frac{1}{n}\right)^\delta}\,dx
=
\int_{\Omega}
\frac{f}{u^\delta}\,\varphi\,dx,\quad\forall\varphi\in C_c^{1}(\Omega).
\end{equation}
Finally, combining \eqref{l1}, \eqref{l2}, and \eqref{r} in the weak formulation of \eqref{auxeqn}, we conclude the desired result.

\textbf{Uniqueness:} Suppose that $v_1,v_2\in X_\alpha$ are two weak solutions of
problem~\eqref{meqn}. By Remark~\ref{Rmwksol}, both satisfy the weak formulation for every
$\varphi\in X_\alpha$, namely,
\begin{align}
&\int_{\Omega}
H(\nabla v_1)^{p-1}\nabla H(\nabla v_1)\nabla\varphi\,w\,dx
+\int_{\mathbb{R}^{N}}\int_{\mathbb{R}^{N}}
J_p(v_1(x)-v_1(y))
\bigl(\varphi(x)-\varphi(y)\bigr)\,d\mu
\nonumber\\
&\qquad=
\int_{\Omega}
f(x)v_1^{-\delta}\varphi\,dx,
\label{eq:weak-u1}
\end{align}
and
\begin{align}
&\int_{\Omega}
H(\nabla v_2)^{p-1}\nabla H(\nabla v_2)\nabla\varphi\,w\,dx
+\int_{\mathbb{R}^{N}}\int_{\mathbb{R}^{N}}
J_p(v_2(x)-v_2(y))
\bigl(\varphi(x)-\varphi(y)\bigr)\,d\mu
\nonumber\\
&\qquad=
\int_{\Omega}
f(x)v_2^{-\delta}\varphi\,dx.
\label{eq:weak-u2}
\end{align}

Choosing
$
\varphi=(v_1-v_2)^+\in X_\alpha
$
as a test function in \eqref{eq:weak-u1} and \eqref{eq:weak-u2}, and subtracting the two identities, we obtain
\[
\begin{aligned}
&\int_{\Omega}
\Big(H(\nabla v_1)^{p-1}\nabla H(\nabla v_1)
-
H(\nabla v_2)^{p-1}\nabla H(\nabla v_2)\Big)\nabla (v_1-v_2)^+\,w\,dx
\\
&\quad
+\int_{\mathbb{R}^{N}}\int_{\mathbb{R}^{N}}
\Big(J_p(v_1(x)-v_1(y))
-
J_p(v_2(x)-v_2(y))\Big)
\Big((v_1-v_2)^+(x)-(v_1-v_2)^+(y)\Big)\,d\mu
\\
&=
\int_{\Omega}
f(x)\bigl(v_1^{-\delta}-v_2^{-\delta}\bigr)
(v_1-v_2)^+\,dx.
\end{aligned}
\]
Now, arguing exactly as in the proof of monotonicity of $u_n$ in Lemma \ref{approx}, we obtain that
$
(v_1-v_2)^+(x)=0
\,\text{ for a.e. } x \text{ in }\Omega.
$
Interchanging the roles of $v_1$ and $v_2$ yields
$
(v_2-v_1)^+(x)=0
\,\text{ for a.e. } x \text{ in }\Omega.
$
Therefore,
$
v_1(x)=v_2(x)
\,\text{ for a.e. } x \text{ in }\Omega,
$
which completes the proof.
\end{proof}

\begin{proof}[Proof of Theorem \ref{thm5}]  
Recall that \(X_\alpha\) is endowed with the norm
\[
\|v\|_{X_\alpha}
=
\left(
\int_{\Omega}H(\nabla v)^{p}\,w\,dx
+
\alpha
\int_{\mathbb{R}^{N}}\int_{\mathbb{R}^{N}}
|v(x)-v(y)|^{p}\,d\mu
\right)^{\frac1p}.
\]

\begin{enumerate}
\item[$(a)$] We first prove that
\[
\Theta(\Omega)
:=
\inf_{v\in S_\delta}\|v\|_{X_\alpha}^{p}
=
\|u_\delta\|_{X_\alpha}^{\frac{p(1-\delta-p)}{1-\delta}},
\]
where
\[
S_\delta
:=
\left\{
v\in X_\alpha:
\int_{\Omega}|v|^{1-\delta}f\,dx=1
\right\}.
\]

Define
\[
V_\delta
:=
\tau_\delta u_\delta,
\qquad
\tau_\delta
:=
\left(
\int_{\Omega}
u_\delta^{\,1-\delta}f\,dx
\right)^{-\frac{1}{1-\delta}}.
\]
Then \(V_\delta\in S_\delta\).

By Remark~\ref{Rmwksol}, choosing \(u_\delta\) as a test function in \eqref{wksoleqn}, we obtain
$
\|u_\delta\|_{X_\alpha}^{p}
=
\int_{\Omega}
u_\delta^{\,1-\delta}f\,dx.
$
Consequently,
\[
\|V_\delta\|_{X_\alpha}^{p}
=
\tau_\delta^{p}\|u_\delta\|_{X_\alpha}^{p}
=
\left(
\int_{\Omega}
u_\delta^{\,1-\delta}f\,dx
\right)^{-\frac{p}{1-\delta}}
\|u_\delta\|_{X_\alpha}^{p}
=
\|u_\delta\|_{X_\alpha}^{\frac{p(1-\delta-p)}{1-\delta}}.
\]

Now let \(v\in S_\delta\) be arbitrary and we set
$
\lambda
=
\|v\|_{X_\alpha}^{-\frac{p}{p+\delta-1}}.
$
Since \(u_\delta\) minimizes the functional \(I_\delta\) (cf. Lemma~\ref{minprop}), we have
$
I_\delta(u_\delta)
\le
I_\delta(\lambda|v|).
$
Using the fact that \(v\in S_\delta\), a straightforward computation yields
\[
\|u_\delta\|_{X_\alpha}^{\frac{p(1-\delta-p)}{1-\delta}}
\le
\|v\|_{X_\alpha}^{p}.
\]
Taking the infimum over all \(v\in S_\delta\), we conclude that
\[
\Theta(\Omega)
=
\|u_\delta\|_{X_\alpha}^{\frac{p(1-\delta-p)}{1-\delta}},
\]
which completes the proof of part $(a)$.

\item[$(b)$] Suppose first that the weighted Sobolev-type inequality \eqref{inequality2} holds. If \(C>\Theta(\Omega)\), then, by part~$(a)$ and the fact that \(V_\delta\in S_\delta\),
\[
C\left(\int_{\Omega}V_\delta^{\,1-\delta}f\,dx\right)^{\frac{p}{1-\delta}}
>
\|V_\delta\|_{X_\alpha}^{p},
\]
which contradicts \eqref{inequality2}. Therefore,
$
C\le\Theta(\Omega).
$

Conversely, assume that \(C\le\Theta(\Omega)\). Let \(v\in X_\alpha\setminus\{0\}\) and we define
\[
w
:=
\left(
\int_{\Omega}|v|^{1-\delta}f\,dx
\right)^{-\frac{1}{1-\delta}}
v.
\]
Then \(w\in S_\delta\). Hence,
\[
C
\le
\Theta(\Omega)
\le
\|w\|_{X_\alpha}^{p}.
\]
Since
\[
\|w\|_{X_\alpha}^{p}
=
\left(
\int_{\Omega}|v|^{1-\delta}f\,dx
\right)^{-\frac{p}{1-\delta}}
\|v\|_{X_\alpha}^{p},
\]
we obtain
\[
C
\left(
\int_{\Omega}|v|^{1-\delta}f\,dx
\right)^{\frac{p}{1-\delta}}
\le
\|v\|_{X_\alpha}^{p},
\]
which is precisely \eqref{inequality2}.

\item[$(c)$] By part~$(a)$, we have
$
\Theta(\Omega)=\|V_\delta\|_{X_\alpha}^{p}.
$

Let \(v\in S_\delta\) satisfy
$
\|v\|_{X_\alpha}^{p}=\Theta(\Omega).
$

Since
\[
\int_\Omega |\,|v|\,|^{1-\delta}f\,dx
=
\int_\Omega |v|^{1-\delta}f\,dx
=1,
\]
we have \(|v|\in S_\delta\). Moreover,
$
\||v|\|_{X_\alpha}\le \|v\|_{X_\alpha}.
$
Hence,
$
\Theta(\Omega)
\le
\||v|\|_{X_\alpha}^{p}
\le
\|v\|_{X_\alpha}^{p}
=
\Theta(\Omega),
$
which implies
$
\||v|\|_{X_\alpha}^{p}=\Theta(\Omega).
$
Therefore, replacing \(v\) by \(|v|\) if necessary, we may assume that
$
v\ge0\text{ in }\Omega.
$
Now we define
\[
g:=
\left(
\int_\Omega
\left(\frac{v+V_\delta}{2}\right)^{1-\delta}
f\,dx
\right)^{\frac1{1-\delta}},
\]
and
\[
h:=
\frac{v+V_\delta}{2g}.
\]
Since \(0<1-\delta<1\), the function
$
t\mapsto t^{1-\delta}
$
is concave. Consequently,
\[
\left(\frac{a+b}{2}\right)^{1-\delta}
\ge
\frac{a^{1-\delta}+b^{1-\delta}}2,
\]
for all \(a,b\ge0\). Since \(v,V_\delta\in S_\delta\),
\[
g^{\,1-\delta}
=
\int_\Omega
\left(\frac{v+V_\delta}{2}\right)^{1-\delta}
f\,dx
\ge
\frac12
\int_\Omega
(v^{1-\delta}+V_\delta^{1-\delta})f\,dx
=1.
\]
Hence,
$
g\ge1.
$ Since \(h\in S_\delta\),
\[
\Theta(\Omega)
\le
\|h\|_{X_\alpha}^{p}
=
\frac1{g^{p}}
\left\|
\frac{v+V_\delta}{2}
\right\|_{X_\alpha}^{p}.
\]
Using the convexity of the norm,
\[
\left\|
\frac{v+V_\delta}{2}
\right\|_{X_\alpha}^{p}
\le
\frac{\|v\|_{X_\alpha}^{p}+\|V_\delta\|_{X_\alpha}^{p}}2
=
\Theta(\Omega),
\]
we obtain
\[
\Theta(\Omega)
\le
\frac{\Theta(\Omega)}{g^{p}}
\le
\Theta(\Omega),
\]
where the last inequality follows from \(g\ge1\).

Therefore,
$
g=1
$
and
\[
\left\|
\frac{v+V_\delta}{2}
\right\|_{X_\alpha}
=
\frac{\|v\|_{X_\alpha}+\|V_\delta\|_{X_\alpha}}{2}.
\]
Since \({X_\alpha}\) is uniformly convex, equality in the above relation is possible only if
$
v=V_\delta.
$
Finally, let \(\hat{h}\in X_\alpha\setminus\{0\}\) satisfy \eqref{sim}. We define
\[
\gamma
=
\left(
\int_\Omega |\hat{h}|^{1-\delta}f\,dx
\right)^{-\frac1{1-\delta}}.
\]
Then \(\gamma |\hat{h}|\in S_\delta\), and hence
$
\gamma |\hat{h}|=V_\delta.
$
Therefore,
\[
|\hat{h}|
=
\gamma^{-1}V_\delta
=
\gamma^{-1}\tau_\delta u_\delta.
\]
If \(\hat{h}\ge0\), then
$
\hat{h}=\gamma^{-1}\tau_\delta u_\delta,
$
while if \(\hat{h}\le0\), then
$
\hat{h}=-\gamma^{-1}\tau_\delta u_\delta.
$
Thus,
\[
\hat{h}=\pm\gamma^{-1}\tau_\delta u_\delta,
\]
which completes the proof.
\end{enumerate}
\end{proof}

\section*{Acknowledgement} Prashanta Garain acknowledges the financial support of the Anusandhan National Research Foundation (ANRF), India under the ARG-MATRICS grant, File No. ANRF/ARGM/2025\\/001315/MTR.


\begin{thebibliography}{10}

\bibitem{Annali}
Giovanni Anello, Francesca Faraci, and Antonio Iannizzotto.
\newblock On a problem of {H}uang concerning best constants in {S}obolev embeddings.
\newblock {\em Ann. Mat. Pura Appl. (4)}, 194(3):767--779, 2015.

\bibitem{AGmz}
Gurdev~Chand Anthal and Prashanta Garain.
\newblock Symmetry, existence and regularity results for a class of mixed local-nonlocal semilinear singular elliptic problem via variational characterization.
\newblock {\em Math. Z.}, 311(1):Paper No. 20, 2025.

\bibitem{ARad}
Rakesh Arora and Vicen\c tiu~D. R\u~adulescu.
\newblock Combined effects in mixed local-nonlocal stationary problems.
\newblock {\em Proc. Roy. Soc. Edinburgh Sect. A}, 155(1):10--56, 2025.

\bibitem{BDpur}
Kaushik Bal and Stuti Das.
\newblock Multiplicity of solutions for mixed local-nonlocal elliptic equations with singular nonlinearity, 2024.

\bibitem{BDp}
Kaushik Bal and Stuti Das.
\newblock Regularity results for a class of mixed local and nonlocal singular problems involving distance function, 2025.

\bibitem{BGmm22}
Kaushik Bal and Prashanta Garain.
\newblock Weighted and anisotropic {S}obolev inequality with extremal.
\newblock {\em Manuscripta Math.}, 168(1-2):101--117, 2022.

\bibitem{BGdie}
Kaushik Bal and Prashanta Garain.
\newblock Weighted anisotropic {S}obolev inequality with extremal and associated singular problems.
\newblock {\em Differential Integral Equations}, 36(1-2):59--92, 2023.

\bibitem{Ok24}
Linus Behn, Lars Diening, Jihoon Ok, and Julian Rolfes.
\newblock Nonlocal equations with degenerate weights, 2024.

\bibitem{BFKzamp}
M.~Belloni, V.~Ferone, and B.~Kawohl.
\newblock Isoperimetric inequalities, {W}ulff shape and related questions for strongly nonlinear elliptic operators.
\newblock {\em Z. Angew. Math. Phys.}, 54(5):771--783, 2003.
\newblock Special issue dedicated to Lawrence E. Payne.

\bibitem{BPugh}
A.~L. Bertozzi and M.~Pugh.
\newblock The lubrication approximation for thin viscous films: regularity and long-time behavior of weak solutions.
\newblock {\em Comm. Pure Appl. Math.}, 49(2):85--123, 1996.

\bibitem{Ghoshjga}
Souvik Bhowmick and Sekhar Ghosh.
\newblock Existence results for mixed local and nonlocal elliptic equations involving singularity and nonregular data.
\newblock {\em J. Geom. Anal.}, 35(8):Paper No. 218, 46, 2025.

\bibitem{Vecchi}
Stefano Biagi and Eugenio Vecchi.
\newblock Multiplicity of positive solutions for mixed local-nonlocal singular critical problems.
\newblock {\em Calc. Var. Partial Differential Equations}, 63(9):Paper No. 221, 45, 2024.

\bibitem{Biroud}
Kheireddine Biroud.
\newblock Mixed local and nonlocal equation with singular nonlinearity having variable exponent.
\newblock {\em J. Pseudo-Differ. Oper. Appl.}, 14(1):Paper No. 13, 24, 2023.

\bibitem{BGjga}
Sanjit Biswas and Prashanta Garain.
\newblock Existence {T}heory for a {C}lass of {S}emilinear {M}ixed {L}ocal and {N}onlocal {E}quations {I}nvolving {V}ariable {S}ingularities and {S}ingular {M}easures.
\newblock {\em J. Geom. Anal.}, 36(8):Paper No. 271, 2026.

\bibitem{BG26}
Sanjit Biswas and Prashanta Garain.
\newblock On the regularity theory for mixed local and nonlocal weighted quasilinear elliptic equations, 2026.

\bibitem{BGccm}
Sanjit Biswas and Prashanta Garain.
\newblock Regularity and existence for semilinear mixed local-nonlocal equations with variable singularities and measure data.
\newblock {\em Commun. Contemp. Math.}, 28(1):Paper No. 2550028, 39, 2026.

\bibitem{Boc-Or}
Lucio Boccardo and Luigi Orsina.
\newblock Semilinear elliptic equations with singular nonlinearities.
\newblock {\em Calc. Var. Partial Differential Equations}, 37(3-4):363--380, 2010.

\bibitem{Xiathesis}
Xia C.
\newblock {\em On a class of anisotropic problems}.
\newblock Dissertation zur Erlan-gung des Doktorgrades der Fakultät Mathematik und Physik der Albert-Ludwigs-Universität Freiburgim Breisgau, 2012.

\bibitem{Caninoetal}
Annamaria Canino, Luigi Montoro, Berardino Sciunzi, and Marco Squassina.
\newblock Nonlocal problems with singular nonlinearity.
\newblock {\em Bull. Sci. Math.}, 141(3):223--250, 2017.

\bibitem{Canino}
Annamaria Canino, Berardino Sciunzi, and Alessandro Trombetta.
\newblock Existence and uniqueness for {$p$}-{L}aplace equations involving singular nonlinearities.
\newblock {\em NoDEA Nonlinear Differential Equations Appl.}, 23(2):Art. 8, 18, 2016.

\bibitem{MB}
Philippe~G. Ciarlet.
\newblock {\em Linear and nonlinear functional analysis with applications}.
\newblock Society for Industrial and Applied Mathematics, Philadelphia, PA, 2013.

\bibitem{CRT}
M.~G. Crandall, P.~H. Rabinowitz, and L.~Tartar.
\newblock On a {D}irichlet problem with a singular nonlinearity.
\newblock {\em Comm. Partial Differential Equations}, 2(2):193--222, 1977.

\bibitem{Dama}
Lucio Damascelli.
\newblock Comparison theorems for some quasilinear degenerate elliptic operators and applications to symmetry and monotonicity results.
\newblock {\em Ann. Inst. H. Poincar\'e{} C Anal. Non Lin\'eaire}, 15(4):493--516, 1998.

\bibitem{DeCave}
Linda~Maria De~Cave.
\newblock Nonlinear elliptic equations with singular nonlinearities.
\newblock {\em Asymptot. Anal.}, 84(3-4):181--195, 2013.

\bibitem{Dna}
R.~Dhanya, Jacques Giacomoni, and Ritabrata Jana.
\newblock Interior and boundary regularity of mixed local nonlocal problem with singular data and its applications.
\newblock {\em Nonlinear Anal.}, 262:Paper No. 113940, 26, 2026.

\bibitem{Hitchhiker'sguide}
Eleonora Di~Nezza, Giampiero Palatucci, and Enrico Valdinoci.
\newblock Hitchhiker's guide to the fractional {S}obolev spaces.
\newblock {\em Bull. Sci. Math.}, 136(5):521--573, 2012.

\bibitem{DVap}
Serena Dipierro and Enrico Valdinoci.
\newblock Description of an ecological niche for a mixed local/nonlocal dispersal: an evolution equation and a new {N}eumann condition arising from the superposition of {B}rownian and {L}\'{e}vy processes.
\newblock {\em Phys. A}, 575:Paper No. 126052, 20, 2021.

\bibitem{Drabek}
Pavel Dr\'abek, Alois Kufner, and Francesco Nicolosi.
\newblock {\em Quasilinear elliptic equations with degenerations and singularities}, volume~5 of {\em De Gruyter Series in Nonlinear Analysis and Applications}.
\newblock Walter de Gruyter \& Co., Berlin, 1997.

\bibitem{Nmn}
G.~Ercole and G.~A. Pereira.
\newblock Fractional {S}obolev inequalities associated with singular problems.
\newblock {\em Math. Nachr.}, 291(11-12):1666--1685, 2018.

\bibitem{FW}
Csaba Farkas and Patrick Winkert.
\newblock An existence result for singular {F}insler double phase problems.
\newblock {\em J. Differential Equations}, 286:455--473, 2021.

\bibitem{Garainmn}
Prashanta Garain.
\newblock On a degenerate singular elliptic problem.
\newblock {\em Math. Nachr.}, 295(7):1354--1377, 2022.

\bibitem{Garainjga}
Prashanta Garain.
\newblock On a class of mixed local and nonlocal semilinear elliptic equation with singular nonlinearity.
\newblock {\em J. Geom. Anal.}, 33(7):Paper No. 212, 20, 2023.

\bibitem{Gjms}
Prashanta Garain.
\newblock Mixed anisotropic and nonlocal {S}obolev type inequalities with extremal.
\newblock {\em J. Math. Sci. (N.Y.)}, 281(5):633--645, 2024.

\bibitem{GarainHein}
Prashanta Garain.
\newblock Nonlocal singular problem and associated sobolev type inequality with extremal in the heisenberg group, 2025.

\bibitem{Gop2}
Prashanta Garain.
\newblock On mixed local-nonlocal {S}obolev-type inequalities and their connection with singular equations in the {H}eisenberg group.
\newblock {\em Opuscula Math.}, 46(3):305--321, 2026.

\bibitem{GKK}
Prashanta Garain, Wontae Kim, and Juha Kinnunen.
\newblock On the regularity theory for mixed anisotropic and nonlocal {$p$}-{L}aplace equations and its applications to singular problems.
\newblock {\em Forum Math.}, 36(3):697--715, 2024.

\bibitem{GMmed}
Prashanta Garain and Tuhina Mukherjee.
\newblock On a class of weighted {$p$}-{L}aplace equation with singular nonlinearity.
\newblock {\em Mediterr. J. Math.}, 17(4):Paper No. 110, 18, 2020.

\bibitem{Garaincpaa}
Prashanta Garain and Tuhina Mukherjee.
\newblock Quasilinear nonlocal elliptic problems with variable singular exponent.
\newblock {\em Commun. Pure Appl. Anal.}, 19(11):5059--5075, 2020.

\bibitem{GU21}
Prashanta Garain and Alexander Ukhlov.
\newblock Mixed local and nonlocal {S}obolev inequalities with extremal and associated quasilinear singular elliptic problems.
\newblock {\em Nonlinear Anal.}, 223:Paper No. 113022, 35, 2022.

\bibitem{GUaamp}
Prashanta Garain and Alexander Ukhlov.
\newblock Singular subelliptic equations and {S}obolev inequalities on {C}arnot groups.
\newblock {\em Anal. Math. Phys.}, 12(2):Paper No. 67, 18, 2022.

\bibitem{GSanona}
Jacques Giacomoni, Tuhina Mukherjee, and Konijeti Sreenadh.
\newblock Positive solutions of fractional elliptic equation with critical and singular nonlinearity.
\newblock {\em Adv. Nonlinear Anal.}, 6(3):327--354, 2017.

\bibitem{Gpur}
Abdelhamid Gouasmia.
\newblock Uniqueness results for mixed local and nonlocal equations with singular nonlinearities and source terms.
\newblock {\em Math. Nachr.}, 299(3):529--577, 2026.

\bibitem{Heinonen}
Juha Heinonen, Tero Kilpel\"ainen, and Olli Martio.
\newblock {\em Nonlinear potential theory of degenerate elliptic equations}.
\newblock Dover Publications, Inc., Mineola, NY, 2006.
\newblock Unabridged republication of the 1993 original.

\bibitem{HH}
Shuibo Huang and Hichem Hajaiej.
\newblock Lazer-{M}c{K}enna type problem involving mixed local and nonlocal elliptic operators.
\newblock {\em NoDEA Nonlinear Differential Equations Appl.}, 32(1):Paper No. 6, 45, 2025.

\bibitem{Stam}
David Kinderlehrer and Guido Stampacchia.
\newblock {\em An introduction to variational inequalities and their applications}, volume~88 of {\em Pure and Applied Mathematics}.
\newblock Academic Press, Inc. [Harcourt Brace Jovanovich, Publishers], New York-London, 1980.

\bibitem{Lz}
A.~C. Lazer and P.~J. McKenna.
\newblock On a singular nonlinear elliptic boundary-value problem.
\newblock {\em Proc. Amer. Math. Soc.}, 111(3):721--730, 1991.

\bibitem{LL}
Erik Lindgren and Peter Lindqvist.
\newblock Fractional eigenvalues.
\newblock {\em Calc. Var. Partial Differential Equations}, 49(1-2):795--826, 2014.

\bibitem{MV}
I.-I. Mezei and O.~Vas.
\newblock Existence results for some {D}irichlet problems involving {F}insler-{L}aplacian operator.
\newblock {\em Acta Math. Hungar.}, 157(1):39--53, 2019.

\end{thebibliography}
\end{document}